\newcommand{\leaveout}[1]{\centerline{
\framebox{ Something has been left out here}}}
\newcommand{\forest}{\ensuremath{\bar{T}}}
\newcommand{\rank}{\ensuremath{\mathrm{rank}}}
\newtheorem{theorem}{Theorem}
\newtheorem{lemma}[theorem]{Lemma}
\title{\bf The minimal stage, energy preserving Runge-Kutta method for polynomial Hamiltonian systems is the Averaged Vector Field method}
\author{E. Celledoni and B. Owren and Y. Sun}
\begin{document}
\maketitle

\begin{abstract}
No Runge-Kutta method can be energy preserving for all Hamiltonian systems. 
But for problems in which the Hamiltonian is a polynomial, the Averaged Vector Field (AVF) method can be interpreted as a Runge-Kutta method whose weights $b_i$ and abscissae $c_i$ represent a quadrature rule of degree at least that of the Hamiltonian. We prove that when the number of stages is minimal, the Runge-Kutta scheme must in fact be identical to the AVF scheme.
 \end{abstract}
 
 \section{Introduction and main result}
 We shall be concerned with canonical Hamiltonian systems
 \begin{equation}\label{eq:hamilt}
      y' = J^{-1} \nabla H(y) = f(y),\qquad J=\left(\begin{array}{cc} 0& I\\-I & 0\end{array}\right).
 \end{equation}
 The numerical solution of problems of the this type has been treated extensively in the literature, we refer to the monographs \cite{hairer06gni, leimkuhler04shd} and the references therein for details. Two of the most important properties of the system \eqref{eq:hamilt} are 
 that the flow is a symplectic map and that the Hamiltonian $H(y)$ is preserved along any solution $y(t)$. The circumstances under which various numerical integrators inherit these two properties are by now fairly well understood. The focus in the present paper is the preservation of the Hamiltonian itself, we study integrators generating a sequence of approximations $\{y_n\}$ to the solution of \eqref{eq:hamilt} such that $H(y_n)=H(y_0)$ for all $n\geq 1$.
In particular we consider what can be achieved when the Hamiltonian is polynomial and the integrator is a Runge-Kutta method. For linear Hamiltonians, the resulting ODE is constant and any consistent Runge-Kutta scheme will reproduce the exact solution. If the Hamiltonian is quadratic, then the resulting ODE is linear, and the condition for preserving energy is that the stability function of the method satisfies $R(z)R(-z)=1$. For polynomials of higher order it is not known to which extent Runge-Kutta methods can preserve the Hamiltonian.
 However, it was noted in \cite{quispel08anc} that the Averaged Vector Field (AVF) method, defined as
 \begin{equation}\label{eq:AVFmethod}
      y_{n+1} = y_n + h\int_0^1 f((1-\xi)y_n+\xi y_{n+1})\,\mathrm{d}\xi
 \end{equation}
 preserves the Hamiltonian for all problems of the form \eqref{eq:hamilt}. The AVF method has second order convergence.
 In particular, when the Hamiltonian is a polynomial, the integral can be exactly resolved a priori, the same result is obtained if the integral in \eqref{eq:AVFmethod} is replaced by a quadrature rule of sufficiently high order. This was observed in \cite{celledoni09epr}.
 In fact,  a standard linear quadrature formula with abscissae 
 $c=(c_1,\ldots,c_s)^T$ and weights 
 $b=(b_1,\ldots,b_s)^T$, results in a Runge-Kutta method in which the Butcher matrix  is given as $A=cb^T$. This immediately shows that for any polynomial Hamiltonian system, there exist Runge-Kutta methods which exactly preserve the energy. Note also that any choice of quadrature rule of sufficiently high order yields the same approximation, the AVF method is reproduced exactly.

 As pointed out in
 \cite{celledoni09epr} any energy-preserving integrator for \eqref{eq:hamilt} must obey all quadrature conditions, but for polynomial systems this can be relaxed. Letting the Hamiltonian be a polynomial of degree $m$, a necessary condition for the energy to be preserved is that the quadrature conditions hold up to order $m$, or in terms of Runge-Kutta coefficients
 \begin{equation}\label{eq:quadcond}
      \sum_i b_ic_i^{k-1} = \frac{1}{k},\quad k=1,\ldots, m.
 \end{equation}
 Thus, in considering energy preserving Runge-Kutta methods for polynomial Hamiltonians of degree $\leq m$ one may immediately restrict the focus to schemes whose coefficients satisfy \eqref{eq:quadcond}. If $m=2s$ then the smallest possible number of stages in the scheme is $s$ the resulting abscissae and weights are those of the Gauss-Legendre quadrature rule.
 If $m=2s-1$ then the smallest possible number of stages is still $s$, but the quadrature rule is not uniquely given, although applying the corresponding Runge--Kutta method with $A=cb^T$ yields the same result for all $(c_i,b_i)$ satisfying \eqref{eq:quadcond} for all $k\leq 2s-1$.  In such a situation, we are interested in answering the question of whether the Butcher matrix $A=cb^T$ is unique. We shall restrict our search to Butcher matrices satisfying the usual condition
 \begin{equation} \label{eq:rowsum}
 \sum_{j=1}^s a_{ij} = c_i,\quad i=1,\ldots,s.
 \end{equation}
 We shall prove the following theorem
 \begin{theorem} \label{theo:main}
 Let $m\geq 3$. Among all Runge--Kutta methods which exactly preserve all polynomial Hamiltonians of degree at most $m$, those with the minimal number of stages coincide with the AVF method \eqref{eq:AVFmethod} when applied to such problems.
 The number of stages in these methods is $\lfloor(m+1)/2\rfloor$.
 \end{theorem}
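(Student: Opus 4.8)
The plan is to translate exact energy preservation into algebraic conditions on the Butcher coefficients, and then to show that at the minimal number of stages those conditions admit $A=cb^T$ as their only solution, so that the method reduces to \eqref{eq:AVFmethod}.

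First I would expand the energy increment $H(y_{n+1})-H(y_n)$ as a B-series in the stepsize $h$, indexed by rooted trees, so that the coefficient of each power of $h$ is a sum of elementary weights (polynomials in the $a_{ij}$ and $b_i$) multiplied by the corresponding elementary differentials contracted against $\nabla H$. The essential simplification comes from the Hamiltonian structure $f=J^{-1}\nabla H$ with $J^{-1}$ skew-symmetric: a scalar contraction $\langle\nabla H,F(t)\rangle$ carrying an odd number of $J^{-1}$ factors in a closed contraction vanishes identically, and many of the remaining ones coincide up to sign. Concretely, at order $h^2$ the only surviving condition is $\sum_i b_ic_i=\tfrac12$, while at order $h^3$ the weight $b^TAc$ multiplies a differential that is annihilated by skew-symmetry, so that order contributes only $\sum_i b_ic_i^2=\tfrac13$. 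Iterating this bookkeeping shows that the surviving conditions split into the quadrature conditions \eqref{eq:quadcond} together with a residual family of non-quadrature conditions genuinely involving the off-diagonal structure of $A$.

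Second, the quadrature conditions alone fix the stage count. Since \eqref{eq:quadcond} is necessary for energy preservation, the nodes $c_i$ with weights $b_i$ integrate every polynomial of degree $\le m-1$ exactly; as an $s$-point quadrature has degree of exactness at most $2s-1$, we need $m-1\le 2s-1$, i.e. $s\ge\lceil m/2\rceil=\lfloor(m+1)/2\rfloor$. The Gauss-based AVF method realises this bound, so the minimal number of stages is exactly $\lfloor(m+1)/2\rfloor$; moreover at this minimal count the $c_i$ must be pairwise distinct, since a repeated node would lower the attainable exactness below $m-1$. The heart of the argument is then to show that, once $s=\lfloor(m+1)/2\rfloor$ and the $c_i$ are distinct, the residual non-quadrature conditions force $A=cb^T$. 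These residual conditions take the form of prescribed values for elementary weights such as $b^TAc^{[k]}$, and at higher orders for longer words $b^TA^pc^{[q]}$ in $A$ and $c$ (with $c^{[k]}$ the componentwise powers). Because the $c_i$ are distinct, the moment vectors $\one,c,\dots,c^{[s-1]}$ form a basis of $\Real^s$ via a nonsingular Vandermonde; I would use this to convert the surviving weight equations into enough independent scalar constraints on $A$ that, matched against the values they take for the rank-one matrix $cb^T$ (which satisfies all of them, AVF being energy preserving), leave $A=cb^T$ as the unique admissible Butcher matrix satisfying \eqref{eq:rowsum}. Since, as recalled in the introduction, every such $cb^T$ coming from an order-$m$ quadrature reproduces the AVF output \eqref{eq:AVFmethod}, this yields Theorem \ref{theo:main}.

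The principal obstacle is precisely this last step: proving that the surviving non-quadrature conditions are not merely consistent with $A=cb^T$ but actually determine it. The difficulty is twofold. The energy conditions are scalar — each is a single contraction $b^T(\cdots)$ — so one must verify that enough of them survive the skew-symmetry collapse to remove every degree of freedom in $A$ left open by \eqref{eq:rowsum}; and the stage equations are nonlinear in $h$, so the conditions form an infinite hierarchy whose relevant, non-vanishing part must be identified and shown to be exhausted exactly at the minimal stage count. I expect the minimality hypothesis to enter decisively here, guaranteeing that no genuinely higher-rank $A$ can slip through; for $s$ strictly larger than $\lfloor(m+1)/2\rfloor$ the extra stages would provide slack and the rigidity should fail, which is consistent with the theorem being stated only for the minimal case.
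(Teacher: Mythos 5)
Your framing is right at the top level---B-series expansion of $H(y_1)-H(y_0)$, necessity of the quadrature conditions \eqref{eq:quadcond} (the paper's Theorem~3), and the stage-count bound $s\ge\lceil m/2\rceil=\lfloor(m+1)/2\rfloor$ realized by $A=cb^T$---but the cancellation mechanism you invoke is mischaracterized, and this matters downstream. The collapse of conditions does not come from counting $J^{-1}$ factors in a closed contraction; it comes from the antisymmetry of elementary Hamiltonians under a root shift, $H(u\circ v)=-H(v\circ u)$ \eqref{eq:Hequiv}, which organizes the energy conditions by \emph{free trees} (Theorem~\ref{theo:2}): superfluous trees ($u\circ u$) drop out, and each nonsuperfluous free tree contributes one condition that is a \emph{signed combination of several} elementary weights, not a prescribed value for a single weight such as $b^TAc^{[k]}$. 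In particular, at order $h^3$ nothing is annihilated: the double bush tree $t_{1,2}$ yields a genuine linear constraint coupling $b^TAc^2$ and $b^TCAc$, the first instance of the family \eqref{eq:DBmono}.

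The genuine gap is the uniqueness step, which you correctly flag as the principal obstacle but for which your proposed mechanism cannot work. You suggest that, with distinct nodes and a nonsingular Vandermonde, the surviving conditions give ``enough independent scalar constraints'' to pin down $A$. They do not: the conditions linear in $A$ (the double bush conditions, viewed as a linear operator $M$ on $\mathbf{R}^{s\times s}$) have a \emph{nontrivial kernel} at the minimal stage count---spanned by $(\mathbf{1}-c)b^T$ when $m=2s$ (Lemma~\ref{lemma:rank}), and three-dimensional, or $(s+1)$-dimensional when $\zeta=-1$, when $m=2s-1$ (Lemmas~\ref{lemma:rank10} and \ref{lemma:kerelts}). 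So no amount of linear bookkeeping removes all degrees of freedom; one must bring in conditions \emph{quadratic} in $A$, namely the triple bush conditions \eqref{eq:3bush} and, in exceptional cases, \eqref{eq:asymbush}, and combine them with the row-sum condition \eqref{eq:rowsum} to show that any candidate $A=cb^T+\beta N$ with $N\in\ker M$ forces $\beta=0$. Your plan also overlooks that in the odd case $m=2s-1$ the minimal-stage quadrature is not unique but a one-parameter family ($c_i$ the zeros of $P_s-\zeta P_{s-1}$), so the kernel computation and the nonlinear elimination must be carried out uniformly in $\zeta$, with the Gauss case $\zeta=0$ and the Radau~I case $\zeta=-1$ requiring separate arguments. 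In short, your proposal reproduces the paper's scaffolding but would stall exactly where its technical content begins: identifying $\ker M$ and defeating it with nonlinear conditions.
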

 In general, there are energy preserving Runge-Kutta methods for polynomial Hamiltonian systems which do not coincide with the AVF-integrator. There also exist such methods of arbitrarily high order. Examples are easily obtained as composition methods based on the AVF-integrator, or by the collocation methods proposed in \cite{iavernaro09hos} and \cite{hairer10epc}. In the rest of this paper, we prove Theorem~\ref{theo:main}. The technique we use can be summarized as follows
 \begin{enumerate}
 \item The first step is to consider a set of conditions for energy preservation which are linear in the Butcher matrix $A$, these conditions are called the \emph{double bush conditions} and may be thought of as a linear system $M(A)=w$ where $M$ is a linear map from $\mathbf{R}^{s\times s}$ into $\mathbf{R}^N$ for some $N$ to be specified.
 \item Then the rank of $M$ is determined, the results are different in the even $(m=2s)$ and
 odd $(m=2s-1)$ cases. A particular basis for the kernel of $M$ is identified in each of the cases.
 \item The discretized AVF method, $A=cb^T$, represents a known solution and any other solution must be
 of the form $A=cb^T + N$, where $N$ is in the kernel of $M$. We show, by using certain nonlinear energy preserving conditions that such solutions require $N=0$.
 \end{enumerate}
 
 The rest of the paper is organized as follows: In Section~\ref{sec:prelim} we review some tools needed from the literature, and we provide general conditions for energy preservation of B-series methods needed in the proof.
 In sections three and four, we prove Theorem~\ref{theo:main} for the cases of even and odd polynomial degree of the Hamiltonian respectively.
  
 \section{Energy preservation for B-series methods}\label{sec:prelim}
 In order to make the paper self contained, we begin this section by
  reviewing some general tools from \cite{chartier06aaa}, see also \cite{faou04ecw} and \cite{celledoni10epi}. General
 conditions for energy preservation are derived for integrators
which possess a B-series expansion. These include the Runge-Kutta methods as a subclass.
A good account of order theory and B-series can be found in the monographs
\cite{butcher08nmf,hairer06gni,hairer93sod}, but for completeness we recount briefly the main ingredients we need.

Let $T$ be the set of rooted trees, and for $t\in T$ we write $|t|$ for its number of vertices.
A forest is an unordered finite collection of trees from $T$, $t_1t_2{\ldots}t_q$ were each tree can appear several times, one may then write $\tau=t_1^{r_1}\cdots t_p^{r_p}$ 
for distinct members $t_1,\ldots,t_p$, indicating that $t_i$ appears $r_i$ times. The set of all forests is denoted $\forest$, and the order $|\tau|$ of a forest is the sum of the orders of each of its elements.
An element of $T$ is either the one-node tree $\ab$, or consists of a root to which a forest is attached, we use the notation $t=[t_1,\ldots,t_q]$ or sometimes
 $t=[t_1^{r_1}t_2^{r^2}\ldots t_p^{r_p}]$ so that each distinct tree $t_i$ occurs $r_i$ times as a subtree of $t$. Sometimes we shall write $B_-(t)$ to denote 
 the forest consisting of the subtrees of $t$. 
 The symmetry coefficient is defined as
\[
    \sigma(\ab)=1,\quad \sigma(t)=r_1!\cdots r_p!\,\sigma(t_1)\cdots\sigma(t_p).
\]
We recall that a large class of integrators, including in particular the Runge-Kutta methods, can be formally expanded into 
an infinite series in terms of derivatives of the vector field $f$, indexed by the set of rooted trees $T$. Writing $y_1 = \psi_h(y)$ where $\psi_h$ is the numerical flow map,
we have
\begin{equation}\label{BseriesED}
    y_1=B(a,y) = y + ha(\ab)f(y) + \frac{h^2}{\sigma(\aabb)} a(\aabb)f'(f)(y)+\cdots
    +\frac{h^{|t|}}{\sigma(t)} a(t) F(t)(y) +\cdots 
\end{equation}
Here $a: T\rightarrow\mathbf{R}$ is a method dependent coefficient map, and $F(t)$ is the elementary differential corresponding to $t$, so that e.g. 
$F(\ab)=f$, $F(\aabb)=f'(f)$. The B-series \eqref{BseriesED} can in many cases be conveniently extended to allow for pullback expansions of functions along the B-series map $B(a,y)$, see for instance \cite{chartier06aaa} or for non-commutative structures we refer to \cite{berland05aso,owren99rkm}. It requires the extension of $a(t)$ to forests, setting for any forest $\tau=t_1^{r_1}\dots t_m^{r_m}$, $a(\tau)=a(t_1)^{r_1}\cdots  a(t_m)^{r_m}$. One has
for any real valued smooth function $G$,
\begin{align}
G(B(a,\cdot)) &= G + ha(\ab)\,G'f+h^2a(\ab)^2\, G''(f,f)+ \cdots  \nonumber\\&
= G + \sum_{\tau\in\forest} \frac{h^{|\tau|}}{\sigma(\tau)} a(\tau)
G^{(q)}(F(\tau_1),\ldots,F(\tau_q)) \label{eq:pullback},
\end{align}
where we sum over forests $\tau=\tau_1\tau_2\ldots\tau_q$. If we apply \eqref{eq:pullback} to the special case where $G=H$ and introduce the \emph{elementary Hamiltonian}, defined for every $t=[t_1\ldots t_q]$ as
\[
      H(t) = H^{(q)}(F(t_1),\ldots,F(t_q)),
\]
we conclude that
\begin{equation} \label{eq:HBay}
H(B(a,y)) =  \sum_{t\in T} \frac{h^{|t|-1}}{\sigma(t)}
\left(\prod_{k=1}^q a(t_k)\right)\,
H(t)(y).
\end{equation}
The term indexed by the one-vertex tree $\ab$ is interpreted
as $H^{(0)}(F(\emptyset))(y) := H(y)$. Thus to formally impose that
$H$ is conserved, i.e. $H(y)=H(B(a,y))$ for a map with B-series $B(a,y)$  amounts to requiring that the the right hand side of \eqref{eq:HBay} minus the first term (with $t=\ab$) sums to zero. It is, however not so that the set of functions $\{H(t), t\in T\}$ is linearly independent. It is well-known that the dependency can be described by means of the \emph{Butcher product}, defined between two trees $u=[u_1{\ldots}u_q]\in T$ and $v\in T$ as
\[
   u\circ v = [u_1u_2\ldots u_qv] \in T.
\] 
This product is non-commutative, and satisfies $|u\circ v| = |u|+|v|$. For  elementary Hamiltonians one has
\begin{equation} \label{eq:Hequiv}
      H(u\circ v) = - H(v\circ u)
\end{equation}
for any pair of trees $u, v\in T$. The two trees $u\circ v$ and $v\circ u$ are topologically identical, $v\circ u$ is obtained from $u\circ v$ by shifting the root one position. Conversely, any two trees $t_1$ and $t_2$ which differ only by such a shift of the root can be represented as $t_1=u\circ v$ and $t_2=v\circ u$ for a certain choice of $u$ and $v$.
This shifting of roots induces an equivalence relation on the set of trees by defining two trees to be equivalent if and only if one can be obtained from the other by zero or more root shifts. All trees in the same equivalence class clearly have the same number of vertices, and each equivalence class is called a \emph{free tree}. The set of all free trees is denoted $FT$ and those with precisely $n$ vertices we call $FT^n$. The canonical projection is denoted $\pi: T\rightarrow FT$.
For two trees $u$ and $v$ in the same equivalence class, we define
$\kappa(u,v)$ to be the number of root shifts necessary to obtain $v$ from $u$ and $\kappa(u,u)=0$.
A special role is played by those trees which have a factorization $u\circ u$, then \eqref{eq:Hequiv} implies $H(u\circ u)=0$. Any free tree which contains a member with such a factorization is called superfluous and we note that superfluous trees have an even number of vertices. The set of nonsuperfluous free trees will hereafter be denoted $FT_*$ and $FT_*^n$.
It follows from \eqref{eq:Hequiv} that for two trees $u$ and $v$ in the same equivalence class, $H(u)=(-1)^{\kappa(u,v)}H(v)$ so, disposing of the superfluous trees for which $H(t)=0$, we may rewrite \eqref{eq:HBay} as follows
\begin{equation} \label{eq:HBayfree}
  H(B(a,y))-H(y) = \sum_{n\geq 2}\sum_{\bar{t}\in FT_*^n}
  h^{|t|-1} H(t)
  \sum_{u\in\pi^{-1}(\bar{t})} \frac{(-1)^{\kappa(t,u)}}{\sigma(u)}\prod_{k=1}^q a(u_k) 
\end{equation}
where $t$ is some designated element in the equivalence class $\bar{t}$, and where
each $u\in\pi^{-1}(\bar{t})$ is composed of subtrees as $u=[u_1{\ldots}u_q]$ ($q$ depends on $u$). It is known that the elementary Hamiltonians corresponding to the set of nonsuperfluous free trees are linearly independent, and that leads us to the condition for energy preservation derived by Chartier et al. \cite{chartier06aaa}, saying that the innermost sum must vanish for every free tree. In fact, since the power of $h$ in the above expression is $|\bar{t}|-1$ we need to consider trees in $FT^{n+1}$ to obtain conditions for energy preservation to order $n$
\begin{theorem}\label{theo:2} \cite{chartier06aaa}
A map with B-series $B(a,y)$ preserves energy up to order $n$ if and only if
\begin{equation} \label{eq:theo2}
\sum_{u\in\pi^{-1}(\bar{t})} \frac{(-1)^{\kappa(t,u)}}{\sigma(u)} a(B_-(u)) = 0,\quad\forall
\bar{t}\in \bigcup_{k\leq n+1} FT^k.
\end{equation}
Here $t$ is a designated member of the equivalence class $\bar{t}$.
\end{theorem}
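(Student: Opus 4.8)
The plan is to read off both implications directly from the expansion \eqref{eq:HBayfree}, which already rewrites the energy increment $H(B(a,y))-H(y)$ as a sum organized by nonsuperfluous free trees. Recall what must be shown: a B-series map preserves energy \emph{up to order $n$} precisely when, for every smooth Hamiltonian $H$ and its associated vector field $f=J^{-1}\nabla H$, the formal series $H(B(a,y))-H(y)$ contains no terms of order $h^1,\dots,h^n$. Since \eqref{eq:HBayfree} is an identity valid for all such $H$, the task reduces to controlling the coefficient attached to each elementary Hamiltonian.

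First I would sort the right-hand side of \eqref{eq:HBayfree} by powers of $h$. A free tree $\bar t$ with $|\bar t|=k$ vertices contributes at order $h^{k-1}$, so suppressing the terms of orders $1$ through $n$ is equivalent to suppressing the contributions of all free trees with $2\le k\le n+1$ vertices; the single-vertex tree only produces the $h^0$ term $H(y)$, which has already been subtracted and yields no condition, while the superfluous trees carry $H(t)=0$ by \eqref{eq:Hequiv} and may be discarded. For a fixed nonsuperfluous free tree $\bar t$ with designated representative $t$, the relation $H(u)=(-1)^{\kappa(t,u)}H(t)$ lets me factor the common elementary Hamiltonian $H(t)$ out of the sum over the whole equivalence class $\pi^{-1}(\bar t)$, leaving exactly the coefficient
\[
  \sum_{u\in\pi^{-1}(\bar t)}\frac{(-1)^{\kappa(t,u)}}{\sigma(u)}\,a(B_-(u)),
\]
which is the left-hand side of \eqref{eq:theo2}.

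The decisive ingredient is then the linear independence of the family $\{H(\bar t):\bar t\in FT_*\}$ of elementary Hamiltonians indexed by nonsuperfluous free trees, which I would invoke as a known fact (see \cite{chartier06aaa}); failing that, it can be established by exhibiting, for each free tree, Hamiltonians whose associated elementary Hamiltonians separate the classes. Granted independence, the ``only if'' direction is immediate: if the increment vanishes to order $n$, then the coefficient of each $H(t)$ with $|\bar t|\le n+1$ must be zero, which is precisely \eqref{eq:theo2}. The ``if'' direction is equally direct: if those coefficients vanish for all free trees with at most $n+1$ vertices, then in \eqref{eq:HBayfree} only terms indexed by free trees with $k\ge n+2$ survive, each carrying a factor $h^{k-1}=O(h^{n+1})$, so energy is preserved to order $n$.

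I expect the only genuine difficulty to be the linear independence statement; the remainder is the bookkeeping of the symmetry factors $\sigma(u)$ and the signs $(-1)^{\kappa(t,u)}$ already encoded in the passage from \eqref{eq:HBay} to \eqref{eq:HBayfree}, together with the elementary matching of powers of $h$.
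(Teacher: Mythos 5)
Your proposal is correct and follows essentially the same route the paper takes: it reproduces the derivation given in the text preceding the theorem (the pullback expansion \eqref{eq:HBay}, the antisymmetry \eqref{eq:Hequiv} used to regroup into free trees as in \eqref{eq:HBayfree}, and the matching of the power $h^{|\bar t|-1}$ to order $n$), with the linear independence of the elementary Hamiltonians over nonsuperfluous free trees invoked as a known fact from \cite{chartier06aaa}, exactly as the paper does. Your discarding of superfluous trees is also consistent with the intended reading of \eqref{eq:theo2}, where the conditions for superfluous free trees are vacuous (their contributions cancel in sign pairs, and $H(t)=0$ yields no constraint), so no content is lost.
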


One may remark that all the conditions of the theorem must be satisfied in order for the corresponding method to be energy preserving for every Hamiltonian function $H$.
However, in this article we are interested in the subclass of Hamiltonians which are multivariate polynomials of some prescribed degree $m$. For such $H(y)$ one realizes that
$H(t)=0$ whenever $t$ contains a vertex with more than $m$ emanating branches. On the other hand, one may verify that all $H(t)$ corresponding to nonsuperfluous free trees with at most $m$ emanating branches from any vertex form a linearly independent set when considered uniformly over all Hamiltonians of degree at most $m$.
The following result is inspired by \cite{celledoni09epr}.
\begin{theorem}
Any consistent B-series method with coefficients $a(t)$ which is energy preserving for all polynomial Hamiltonians of degree $m$ satisfies the quadrature conditions of order $k$ for $1\leq k \leq m$, i.e.
\[
     a([\ab^{k-1}]) = \frac{1}{k},\quad 1\leq k\leq m.
\]
If the method is a Runge-Kutta method with abscissae $c_i$ and weights $b_i$, $i=1,\ldots,s$ a necessary condition for energy preservation is 
\begin{equation} \label{eq:quadRK}
    \sum_{i=1}^s b_ic_i^{k-1} = \frac{1}{k},\quad k=1,\ldots,m.
\end{equation}
\end{theorem}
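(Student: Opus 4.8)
The plan is to read off each quadrature condition from a single, well-chosen instance of the energy-preservation condition in Theorem~\ref{theo:2}. For every $k$ with $2\le k\le m$ I would apply \eqref{eq:theo2} to the free tree $\bar t_k$ whose underlying graph is the star with one central vertex joined to $k$ outer vertices. It has $k+1\le m+1$ vertices, so Theorem~\ref{theo:2} supplies a condition for it. Its centre-rooted representative $[\ab^{k}]$ has a root with $k$ branches and no other branching, so no vertex carries more than $m$ emanating branches; hence $H(\bar t_k)=H^{(k)}(f,\dots,f)$ does not vanish identically on Hamiltonians of degree $m$, and by the linear independence noted just above the theorem the corresponding condition is genuinely active. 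I would also check that $\bar t_k$ is nonsuperfluous for $k\ge 2$: a factorization $u\circ u$ would require the graph to possess two vertices of equal, high valence, which the star does not, so no representative has that form.

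The computational core is to enumerate the class $\pi^{-1}(\bar t_k)$. Since every edge of the star meets the centre, a root shift can only move the root between the centre and a leaf; the class therefore contains exactly two distinct rooted trees, the centre-rooted $[\ab^{k}]$ and the leaf-rooted $[[\ab^{k-1}]]$, lying at graph distance $1$. Designating $t=[\ab^{k}]$ I would record $\kappa(t,[\ab^{k}])=0$ and $\kappa(t,[[\ab^{k-1}]])=1$, the symmetry factors $\sigma([\ab^{k}])=k!$ and $\sigma([[\ab^{k-1}]])=(k-1)!$, and the reduced forests $B_-([\ab^{k}])=\ab^{k}$ and $B_-([[\ab^{k-1}]])=[\ab^{k-1}]$. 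Substituting into \eqref{eq:theo2} and using consistency $a(\ab^{k})=a(\ab)^{k}=1$ leaves the single equation
\[
   \frac{1}{k!}\,a(\ab)^{k}-\frac{1}{(k-1)!}\,a([\ab^{k-1}])=0,
\]
which collapses to $a([\ab^{k-1}])=1/k$. The remaining case $k=1$ is just consistency, $a(\ab)=1$, so the whole range $1\le k\le m$ is covered.

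For the Runge--Kutta assertion I would only translate the bushy-tree coefficient into elementary weights. For a Runge--Kutta method $a([\ab^{k-1}])=\sum_i b_i\bigl(\sum_j a_{ij}\bigr)^{k-1}$, and the row-sum relation \eqref{eq:rowsum} turns this into $\sum_i b_i c_i^{k-1}$; combined with the B-series identity above it yields \eqref{eq:quadRK}.

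The one step that demands care, and where I expect the only real risk of error, is the combinatorial bookkeeping of the second paragraph: confirming that the star's equivalence class consists of exactly the two stated representatives and attaching to them the correct signs $(-1)^{\kappa}$, symmetry coefficients, and reduced forests. Once that two-term identity is secured the conclusion is immediate, so the conceptual content lies entirely in choosing the star family and in verifying (first paragraph) that each $\bar t_k$ is nonsuperfluous and within the branching bound, which is what guarantees the condition is active for degree-$m$ Hamiltonians.
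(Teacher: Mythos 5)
Your proposal is correct and takes essentially the same route as the paper's own proof: both isolate the bushy free tree on $k+1$ vertices, use its exactly two representatives $[\ab^{k}]$ and $[[\ab^{k-1}]]$ with $\kappa=1$, $\sigma=k!$ and $(k-1)!$, invoke consistency to get $a([\ab^{k-1}])=1/k$, and then translate via the standard elementary weight $\sum_i b_i c_i^{k-1}$ for Runge--Kutta methods. The extra verifications you flag (nonsuperfluousness of the star and the activeness of the condition for degree-$m$ polynomial Hamiltonians) are sound and merely make explicit what the paper delegates to the discussion preceding the theorem.
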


\begin{proof}
For $1\leq k\leq m$ we consider \eqref{eq:theo2} for the free tree with $k+1$ vertices containing the bushy tree $t=[\ab^{k}]$ (i.e. the tree consisting of $k$ copies of the one-node tree as subtrees).
There is only one other tree in the equivalence class, namely $t'=[[\ab^{k-1}]]$.
Now $a(B_-(t))=a(\ab^k)=a(\ab)^k=1$ for any consistent method. On the other hand
$a(B_-(t'))=a([\ab]^{k-1})$ and together with \eqref{eq:theo2} and the fact that
$\kappa(t,t')=1$, $\sigma(t)=k!$ and $\sigma(t')=(k-1)!$ we get the desired result. For Runge-Kutta methods it is well-known that the $a([\ab^{k-1}])$ is  the left hand side of \eqref{eq:quadRK}.
 \end{proof}
 \subsection{The double bush conditions}
A certain subset of the nonsuperfluous free trees will play a particular role here, these are the trees which yield linear conditions on the matrix $A$.
We consider the \emph{double bush free trees} that we denote
$t_{p,q}$ for integers $p$ and $q$  in $\{1,2,\ldots,m-1\}$. Clearly $t_{p,p}$ is superfluous, and by symmetry, $t_{p,q}=t_{q,p}$, so one will typically require $1\leq p<q\leq m-1$.
\tikzstyle dbtree=[sibling distance=6mm,level distance=6mm,thick]
\tikzstyle dbtree node=[scale=0.7,shape=circle,very thin,draw]
\tikzstyle dbtree black node=[style=dbtree node,fill=black]
\tikzstyle dbtree white node=[style=dbtree node,fill=white]
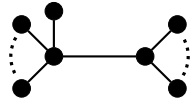
\begin{figure}[ht]
\begin{center}
  \begin{tikzpicture}[dbtree]
    \node[dbtree black node] {}
    child[grow=up] { node[dbtree black node] {} }
    child[grow=north west] { node[dbtree black node] {}}
     child[grow=south west] { node[dbtree black node] {}}
    child[grow=right, level distance=12mm] { node[dbtree black node]{}
    child[grow=north east, level distance=6mm] { node[dbtree black node]{} }
    child [grow=south east, level distance=6mm] { node[dbtree black node]{} }
    };
    \draw[very thick, dotted] (-5mm,2.5mm) arc (150:210:5mm);
    \draw[very thick, dotted] (17mm, -2.5mm) arc (-30:30:5mm);
  \end{tikzpicture}
  \end{center}
\caption{The double bush free tree $t_{p,q}$ having $p$ leaves on one side and $q$ on the other}
\end{figure}

\noindent For $q=m-1$ the maximal number of branches from a vertex is $m$. We state the resulting conditions for energy preservation in the following lemma.
\begin{lemma}\label{lemma:DBmono}
Let $(A,b,c)$ be a Runge-Kutta scheme whose abscissae and weights satisfy the quadrature conditions \eqref{eq:quadRK} for $1\leq k\leq m$.
Then the conditions for energy preservation imposed on the method by the double bush free trees $t_{p,q}$, henceforth called the {\em double bush conditions} are
\begin{equation} \label{eq:DBmono}
pb^T C^{p-1}Ac^q - qb^T C^{q-1} A c^p=\frac{1}{q+1}-\frac{1}{p+1},\qquad 1\leq p<q\leq m-1.
\end{equation}
Here $c=(c_1,\ldots,c_s)^T$, $C=diag(c_1,\ldots,c_s)$. Powers of $c$ are defined componentwise.
A particular solution to the double bush conditions is given as
\begin{equation}\label{eq:avfmat}
     A_{\mathrm{avf}} = cb^T
\end{equation}
\end{lemma}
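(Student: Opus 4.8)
The plan is to specialise the general energy–preservation condition \eqref{eq:theo2} of Theorem~\ref{theo:2} to the single free tree $\bar t = t_{p,q}$ and then to translate the resulting scalar identity into Runge--Kutta matrix notation. First I would enumerate the fibre $\pi^{-1}(t_{p,q})$. Writing the two hub vertices as $L$ (carrying the $p$ leaves) and $R$ (carrying the $q$ leaves), the inequivalent rootings are obtained by placing the root at $L$, at $R$, at a leaf of $L$, or at a leaf of $R$; since $p<q$ these four rooted trees are pairwise distinct, giving
\[
u_L=[\ab^{p}[\ab^{q}]],\quad u_R=[\ab^{q}[\ab^{p}]],\quad u_L'=[[\ab^{p-1}[\ab^{q}]]],\quad u_R'=[[\ab^{q-1}[\ab^{p}]]].
\]

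For each of these I would record the three ingredients entering \eqref{eq:theo2}. Taking $u_L$ as the designated representative, the root–shift distances are $\kappa(u_L,u_R)=\kappa(u_L,u_L')=1$ and $\kappa(u_L,u_R')=2$, so the attached signs are $-,-,+$ respectively. The symmetry coefficients follow from the recursive definition of $\sigma$: one finds $\sigma(u_L)=\sigma(u_R)=p!\,q!$, $\sigma(u_L')=(p-1)!\,q!$ and $\sigma(u_R')=(q-1)!\,p!$. The forest coefficients are computed from the row-sum convention \eqref{eq:rowsum}, which gives $a([\ab^{k}])=b^{T}c^{k}$ and, for a root carrying $j$ leaves together with one $q$-bush subtree, $a([\ab^{j}[\ab^{q}]])=b^{T}C^{j}Ac^{q}$; hence $a(B_-(u_L))=b^{T}c^{q}$, $a(B_-(u_R))=b^{T}c^{p}$, $a(B_-(u_L'))=b^{T}C^{p-1}Ac^{q}$ and $a(B_-(u_R'))=b^{T}C^{q-1}Ac^{p}$.

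Substituting into \eqref{eq:theo2} and clearing the common denominator by multiplying through by $p!\,q!$ collapses the four terms to
\[
b^{T}c^{q}-b^{T}c^{p}-p\,b^{T}C^{p-1}Ac^{q}+q\,b^{T}C^{q-1}Ac^{p}=0 .
\]
Finally I would invoke the quadrature conditions \eqref{eq:quadRK}: because $1\le p<q\le m-1$ we have $p+1,q+1\le m$, so $b^{T}c^{p}=\tfrac{1}{p+1}$ and $b^{T}c^{q}=\tfrac{1}{q+1}$, and rearranging yields exactly \eqref{eq:DBmono}. For the particular solution I would substitute $A=cb^{T}$ directly: by \eqref{eq:quadRK}, $Ac^{q}=cb^{T}c^{q}=\tfrac{1}{q+1}c$, whence $b^{T}C^{p-1}Ac^{q}=\tfrac{1}{(p+1)(q+1)}$ and symmetrically for the other term; the left-hand side of \eqref{eq:DBmono} then equals $\tfrac{p-q}{(p+1)(q+1)}$, matching the right-hand side $\tfrac{1}{q+1}-\tfrac{1}{p+1}$, which establishes \eqref{eq:avfmat}.

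I expect the main obstacle to be the bookkeeping in the enumeration step: one must be certain that $\pi^{-1}(t_{p,q})$ contains exactly these four rooted trees, with correct behaviour in the degenerate case $p=1$ where $C^{0}=I$ keeps every formula valid, and that the signs $(-1)^{\kappa}$ and the factors $\sigma(u)$ are assigned consistently with the chosen representative. The translation of each $a(B_-(u))$ into the matrix expression $b^{T}C^{\,\cdot}Ac^{\,\cdot}$ through \eqref{eq:rowsum} is the other place where an error would propagate, but it becomes mechanical once the elementary-weight recursion is applied.
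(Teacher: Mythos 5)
Your proposal is correct and follows essentially the same route as the paper's own proof: you enumerate the same four rooted trees in the fibre $\pi^{-1}(t_{p,q})$ (your $u_L',u_L,u_R,u_R'$ are the paper's $t_1,t_2,t_3,t_4$), compute the same root-shift distances $\kappa$, symmetry coefficients $\sigma$, and elementary weights, substitute into \eqref{eq:theo2}, and invoke the quadrature conditions, before verifying $A=cb^T$ by the same direct computation. The only cosmetic difference is your choice of designated representative ($t_2$ rather than $t_1$), which merely flips the overall sign of the vanishing sum and changes nothing in \eqref{eq:DBmono}.
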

\begin{proof} The free tree $t_{p,q}$ is the equivalence class containing the four distinct rooted trees: $t_1=[[\ab^{p-1}[\ab^q]]]$, $t_2=[\ab^p[\ab^q]]$,
$t_3=[\ab^q[\ab^p]]$ and $t_4=[[\ab^{q-1}[\ab^p]]]$. 
 \begin{figure}[ht]
 \begin{center}
   \begin{tikzpicture}[dbtree]
    \node[dbtree black node] {}
    child[grow=90] {node[dbtree black node] {}
    child[grow=135] { node[dbtree black node] {} }
    child[grow=45] { node[dbtree black node] {} 
    child[grow=45, level distance=6mm] { node[dbtree black node]{}}
    child[grow=90, level distance=6mm] { node[dbtree black node]{}}
    child[grow=135, level distance=6mm] { node[dbtree black node]{}}}
    };
   \end{tikzpicture}
   \qquad
\begin{tikzpicture}[dbtree]
    \node[dbtree black node] {}
    child[grow=100] { node[dbtree black node] {}} 
    child[grow=150] {node[dbtree black node] {} }
    child[grow=45] { node[dbtree black node] {} 
    child[grow=45, level distance=6mm] { node[dbtree black node]{}}
    child[grow=75, level distance=6mm] { node[dbtree black node]{}}
    child[grow=105, level distance=6mm] { node[dbtree black node]{}}
    };
   \end{tikzpicture}
   \qquad
   \begin{tikzpicture}[dbtree]
    \node[dbtree black node] {}
    child[grow=90] { node[dbtree black node] {}} 
    child[grow=120] {node[dbtree black node] {} }
    child[grow=150] { node[dbtree black node] {} }
    child[grow=45, level distance=6mm] { node[dbtree black node]{}
    child[grow=55, level distance=6mm] { node[dbtree black node]{}}
    child[grow=105, level distance=6mm] { node[dbtree black node]{}}
    };
   \end{tikzpicture}
   \qquad
     \begin{tikzpicture}[dbtree]
    \node[dbtree black node] {}
    child[grow=90] {node[dbtree black node] {}
    child[grow=160] { node[dbtree black node] {} }
    child[grow=110] { node[dbtree black node] {} }
    child[grow=45, level distance=6mm] { node[dbtree black node]{}
    child[grow=65, level distance=6mm] { node[dbtree black node]{}}
    child[grow=115, level distance=6mm] { node[dbtree black node]{}}}
    };
   \end{tikzpicture}
\end{center}
\caption{The four trees in the double bush equivalence class $t_{2,3}$}
\end{figure}
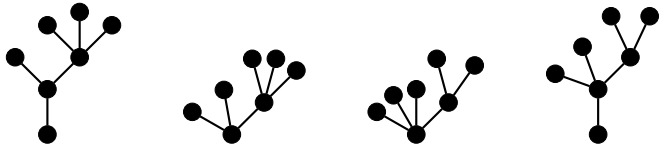

One has then
$\kappa(t_i,t_j)=|i-j|$. By the quadrature conditions,
\[
   a(B_-(t_2))=a(\ab)^pa([\ab^{q}])=\frac{1}{q+1},\quad
   a(B_-(t_3))=a(\ab)^qa([\ab^{p}])=\frac{1}{p+1}.   
\]
whereas
\[
  a(B_-(t_1))=a([\ab^{p-1},[\ab^q]]) = b^TC^{p-1}Ac^q,\qquad
  a(B_-(t_4))=a([\ab^{q-1},[\ab^p]]) = b^TC^{q-1}Ac^p.
\]
We also compute $\sigma(t_1)=(p-1)!q!$ $\sigma(t_2)=\sigma(t_3)=p!q!$,
and $\sigma(t_4)=p!(q-1)!$. Substituting all this into the conditions \eqref{eq:theo2} we get the stated conditions.

We finally substitute  \eqref{eq:avfmat} for $A$ in  \eqref{eq:DBmono} and use the quadrature conditions to get
\[
pb^TC^{p-1}cb^Tc^q - q b^TC^{q-1}cb^Tc^p =
\frac{p}{(p+1)(q+1)}-\frac{q}{(q+1)(p+1)}=\frac{1}{q+1}-\frac{1}{p+1}.
\] 
\end{proof}
\noindent For a given $s$, the double bush conditions \eqref{eq:DBmono} define a linear operator 
$M: \mathbf{R}^{s\times s} \rightarrow \mathbf{R}^{\frac12(m-1)(m-2)}$ acting on the set of Butcher matrices $A$. Generally, this operator depends on the quadrature coefficients$(b_i,c_i)$, as well as on $m$ and $s$. However, in our case we shall always be concerned with quadrature formulas of the highest possible order, so that we have either $m=2s$ or $m=2s-1$. We can then also represent the abscissae and weights of the quadrature formula by means of a single real parameter $\zeta$ as follows: We assume that $(c_1,\ldots,c_s)$ are the distinct zeros of the polynomial
$P_s(x)-\zeta P_{s-1}(x)$ where $P_q$ is the $q$th degree Legendre polynomial relative to
the interval $[0,1]$, and the weights are determined by solving \eqref{eq:quadcond} for $k\leq s$.
We would thus have $M=M(\zeta, s, m)$, but we shall restrict our attention the particular  even and odd cases for $m$: $M(0,s,2s)$ and $M(\zeta,s,2s-1)$ respectively. For ease of notation we still denote the linear operator simply by $M$ when it will be clear from the context whether we are considering the even or odd case. The following lemma is included without proof for future reference
\begin{lemma} \label{lemma:N1}
In both the even and odd cases, the matrix $N_1=(\mathbf{1}-c)b^T$ is in the kernel of $M$, i.e.
$M((\mathbf{1}-c)b^T) = 0$, where $\mathbf{1}=(1,\ldots,1)^T\in\mathbf{R}^s$.
\end{lemma}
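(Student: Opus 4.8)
The plan is to verify directly that $N_1 = (\mathbf{1}-c)b^T$ lies in the kernel of the double bush operator $M$, by substituting $A = N_1$ into the left-hand side of the double bush conditions \eqref{eq:DBmono} and checking that the result vanishes. Since $M$ is the linear operator whose components are indexed by pairs $(p,q)$ with $1\leq p<q\leq m-1$, it suffices to show that for each such pair,
\[
   p\,b^T C^{p-1} N_1 c^q - q\,b^T C^{q-1} N_1 c^p = 0.
\]
Thus the statement reduces to a collection of scalar identities that are purely algebraic in the quadrature data $(b,c)$ and exploit the quadrature conditions \eqref{eq:quadRK}.

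First I would expand $b^T C^{p-1} N_1 c^q$ using $N_1 = (\mathbf{1}-c)b^T = \mathbf{1}b^T - cb^T$. Because $N_1$ has rank one, the expression factors cleanly: writing $C^{p-1}$ acting on the left and $c^q$ on the right, the quantity $b^T C^{p-1}(\mathbf{1}-c) \cdot b^T c^q$ separates into a product of two scalars. Using the quadrature conditions in the form $\sum_i b_i c_i^{k-1} = 1/k$, each factor evaluates explicitly: $b^T C^{p-1}\mathbf{1} = \sum_i b_i c_i^{p-1} = 1/p$, $b^T C^{p-1} c = \sum_i b_i c_i^{p} = 1/(p+1)$, and $b^T c^q = 1/(q+1)$. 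The same reduction applies to the second term with $p$ and $q$ interchanged. The indices involved range up to $q \leq m-1$, so the powers of $c$ appearing are at most $m$, and the quadrature conditions cover exactly this range.

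Assembling these pieces, the first term becomes $p\left(\tfrac1p - \tfrac1{p+1}\right)\tfrac1{q+1}$ and the second becomes $q\left(\tfrac1q - \tfrac1{q+1}\right)\tfrac1{p+1}$. A short simplification shows $p\left(\tfrac1p-\tfrac1{p+1}\right) = \tfrac{1}{p+1}$ and likewise $q\left(\tfrac1q-\tfrac1{q+1}\right) = \tfrac{1}{q+1}$, so both terms equal $\tfrac{1}{(p+1)(q+1)}$ and their difference is zero, as required. Since this holds for every admissible pair $(p,q)$, we conclude $M(N_1) = 0$.

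I do not anticipate a serious obstacle here, since the rank-one structure of $N_1$ makes every matrix–vector contraction collapse into products of scalar moments already controlled by \eqref{eq:quadRK}. The only point demanding mild care is ensuring that all powers $c_i^{p-1}, c_i^{p}, c_i^{q}$ invoked stay within the range $k\leq m$ where the quadrature conditions are guaranteed; this is automatic from the constraint $p<q\leq m-1$. I would also note that the identical calculation works in both the even case $m=2s$ and the odd case $m=2s-1$, since it never uses the specific parametrization of $(b,c)$ by $\zeta$ beyond the quadrature conditions themselves, which justifies stating the lemma uniformly for both cases.
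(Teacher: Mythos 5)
Your proof is correct. The paper states Lemma~\ref{lemma:N1} explicitly \emph{without proof} (``included without proof for future reference''), so there is no argument of the authors to compare against; your direct verification---exploiting the rank-one factorization $b^TC^{p-1}N_1c^q=\bigl(b^TC^{p-1}(\mathbf{1}-c)\bigr)\bigl(b^Tc^q\bigr)$ and reducing everything to the moment conditions $\sum_i b_ic_i^{k-1}=1/k$---is precisely the computation the authors themselves carry out for the particular solution $A_{\mathrm{avf}}=cb^T$ at the end of the proof of Lemma~\ref{lemma:DBmono}, so it is surely the intended argument. Your bookkeeping of the index range is also right: the moments needed have orders $p$, $p+1$, $q+1\leq m$, all covered by \eqref{eq:quadRK}, and since nothing beyond these conditions is used, the lemma indeed holds uniformly in the even case $m=2s$ and the odd case $m=2s-1$, independently of the $\zeta$-parametrization of $(b,c)$. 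One remark: there is an even quicker route. The same moment computation shows that $\mathbf{1}b^T$, like $cb^T$, satisfies the inhomogeneous double bush conditions \eqref{eq:DBmono} (its first term gives $1/(q+1)$ and its second $1/(p+1)$), so $N_1=\mathbf{1}b^T-cb^T$ is a difference of two particular solutions of $M(A)=w$ and lies in $\ker M$ by linearity---but this is a cosmetic shortcut, not a different proof.
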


It is useful to combine the double bush conditions into conditions involving arbitrary polynomials in $C$ and $c$ rather than the monomials used in the previous lemma, we shall let $\Pi_p$ denote the linear space of polynomials of degree at most $p$.
\begin{lemma} \label{lemma:DBpoly}
Let $(A,b,c)$ be a Runge-Kutta scheme whose abscissae and weights satisfy the quadrature conditions \eqref{eq:quadRK} for $1\leq k\leq m$. Assume that it also satisfies the double bush conditions \eqref{eq:DBmono}. Let $P\in\Pi_p$ and $Q\in \Pi_q$ such that $P(0)=Q(0)=0$. Then  
\begin{equation} \label{eq:DBpoly}
   b^TP'(C)AQ(c) - b^T Q'(C)AP(c) = P(1)\int_0^1 Q(t)\,\mathrm{d}t
   -Q(1)\int_0^1 P(t)\,\mathrm{d}t.
\end{equation}
\end{lemma}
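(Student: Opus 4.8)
The plan is to prove \eqref{eq:DBpoly} by linearity, reducing the statement for general polynomials $P,Q$ to the monomial case already established in Lemma~\ref{lemma:DBmono}. Since $P(0)=Q(0)=0$, both polynomials lie in the span of the monomials $x,x^2,\ldots$, so I would write $P(x)=\sum_{i=1}^p \alpha_i x^i$ and $Q(x)=\sum_{j=1}^q \beta_j x^j$. Every term appearing in \eqref{eq:DBpoly} is bilinear in the pair $(P,Q)$: the left-hand side is a bilinear expression in $(P,Q)$ through the products $P'(C)AQ(c)$ and $Q'(C)AP(c)$, and the right-hand side is visibly bilinear as well since $P(1)\int_0^1 Q$ and $Q(1)\int_0^1 P$ are each products of a linear functional of $P$ with a linear functional of $Q$. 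Consequently it suffices to verify the identity when $P(x)=x^i$ and $Q(x)=x^j$ are single monomials with $1\le i,j$.

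First I would record what the monomial case gives. With $P(x)=x^i$ and $Q(x)=x^j$ we have $P'(C)=iC^{i-1}$, $Q'(C)=jC^{j-1}$, $P(c)=c^i$, $Q(c)=c^j$, $P(1)=Q(1)=1$, and $\int_0^1 x^i\,\mathrm dx = \tfrac{1}{i+1}$, $\int_0^1 x^j\,\mathrm dx=\tfrac{1}{j+1}$. Substituting into \eqref{eq:DBpoly}, the claimed identity becomes
\[
   i\,b^TC^{i-1}Ac^j - j\,b^TC^{j-1}Ac^i = \frac{1}{j+1}-\frac{1}{i+1},
\]
which is exactly the double bush condition \eqref{eq:DBmono} with $(p,q)=(i,j)$. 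For $i<j$ in the range $1\le i<j\le m-1$ this is precisely the hypothesis. I would then note the two edge cases that fall outside the range of Lemma~\ref{lemma:DBmono}: when $i=j$ both sides vanish identically (the left-hand side is $i\,b^TC^{i-1}Ac^i - i\,b^TC^{i-1}Ac^i=0$ and the right-hand side is $\tfrac{1}{i+1}-\tfrac{1}{i+1}=0$), and when $i>j$ the identity for $(i,j)$ is just the negative of the identity for $(j,i)$, so it follows from the $j<i$ case by antisymmetry. Both sides of the monomial identity are antisymmetric under interchange of $i$ and $j$, which makes this reduction clean.

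The one point requiring care is the admissible range of the monomial degrees. Lemma~\ref{lemma:DBmono} supplies \eqref{eq:DBmono} only for $1\le p<q\le m-1$, so the bilinear reduction is legitimate only if the expansions of $P$ and $Q$ involve monomials of degree at most $m-1$; this is guaranteed by the hypotheses $P\in\Pi_p$, $Q\in\Pi_q$ together with the implicit assumption $p,q\le m-1$ under which the lemma is invoked in the sequel. I do not expect a genuine obstacle here, as the whole argument is an exercise in bilinear extension; the only thing to get right is bookkeeping the $i=j$ and $i>j$ cases so that every monomial pair is covered and the sign conventions on both sides match. Assembling the monomial identities with coefficients $\alpha_i\beta_j$ and summing over all $i,j$ then yields \eqref{eq:DBpoly} for arbitrary $P,Q$, completing the proof.
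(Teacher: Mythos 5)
Your proof is correct and follows essentially the same route as the paper: expand $P$ and $Q$ in monomials, use bilinearity of both sides of \eqref{eq:DBpoly}, and apply the monomial double bush conditions \eqref{eq:DBmono} termwise. Your explicit handling of the $i=j$ (both sides vanish) and $i>j$ (antisymmetry) cases, and your remark that the monomial degrees must stay within $1\le i,j\le m-1$, merely spell out details the paper's one-line computation leaves implicit.
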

 \begin{proof}
 Write $P(z)=\sum_{p'}\alpha_{p'}z^{p'}$ and $Q(z)=\sum_{q'}\beta_{q'}z^{q'}$.
 Then, using Lemma~\ref{lemma:DBmono}
 \begin{align*}
 b^T P'(C) A Q(c) - b^T Q'(C) A P(c) =
 \sum_{p',q'} \alpha_{p'}\beta_{q'} \left(
 p' b^T C^{p'-1} A c^{q'} - q' b^T C^{q'-1} A c^{p'}
 \right) \\
 =  \sum_{p',q'} \alpha_{p'}\beta_{q'} \left(
 \frac{1}{q'+1} - \frac{1}{p'+1} \right)
 =  P(1)\int_0^1 Q(t)\,\mathrm{d}t
   -Q(1)\int_0^1 P(t)\,\mathrm{d}t.
 \end{align*}
 \end{proof}
 \subsection{Some nonlinear conditions} \label{subsec:nonlincond}
 We shall introduce some conditions for energy preservation which are nonlinear in the Butcher matrix $A$, but that will be used in the final stage of the proof to eliminate the presence of elements from the kernel of $M$ in $A$ for an energy preserving integrator. 
 \begin{figure}[ht]
 \begin{center}
   \begin{tikzpicture}[dbtree]
    \node[dbtree black node] {}
    child[grow=up] { node[dbtree black node] {} }
    child[grow=north west] { node[dbtree black node] {} }
    child[grow=right, level distance=18mm] { node[dbtree black node]{}
    child[grow=north east, level distance=6mm] { node[dbtree black node]{} }
     child[grow=north, level distance=6mm] { node[dbtree black node]{} }
    child [grow=north west, level distance=6mm] { node[dbtree black node]{} }
    child [grow=right, level distance=18mm]{node[dbtree black node]{}
    child[grow=up, level distance=6mm]{node[dbtree black node]{} }
     child[grow=north east, level distance=6mm]{node[dbtree black node]{} }
    } 
    };
  \end{tikzpicture}
  \end{center}
  \caption{The triple bush free tree $t_{p,r,q}$ having $p\geq 1$ leaves on the left side, $q\geq 1$ leaves on the right side, and $r\geq 0$ leaves in the middle\label{fig:triplebush}}
\end{figure}
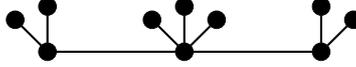
 The {\em triple bush trees}  yield conditions for energy preservation which are quadratic in $A$, an example of such a tree is shown in Figure~\ref{fig:triplebush}. Applying Theorem~\ref{theo:2}, and inserting the appropiate 
   B-series coefficients for Runge--Kutta methods \cite[section III.1.1]{hairer06gni},
 we find the {\em triple bush conditions}
 \begin{align}
 0&=pb^TC^{p-1}AC^rAc^q - b^TC^rAc^q+\frac{1}{(p+1)(q+1)} - rb^TC^{r-1}(Ac^q\circ Ac^p)
 \nonumber\\& - b^T C^r Ac^p + q b^TC^{q-1}AC^rAc^q 
 \end{align}
 Using the same approach as in Lemma~\ref{lemma:DBpoly} we derive the following 
 alternative version
 \begin{align}\label{eq:3bush}
0&= b^T P'(C)AR(C)AQ(c) + b^T Q'(C)AR(C)AP(c)
 -P(1)b^TR(C)AQ(c)\\&-Q(1)b^T R(C)AP(c)
 -b^TR'(C)(AQ(c) \odot AP(c)) + R(1)\int_0^1 P(t)\mathrm{dt}\int_0^1 Q(t)\mathrm{dt}
 \nonumber
 \end{align}
 where $P\in\Pi_p, Q\in \Pi_q, R\in\Pi_r$, $P(0)=Q(0)=0$,
  and where $p\leq m-1, q\leq m-1, r\leq m-2$. The symbol $\odot$ signifies component wise product between two vectors.
 
 Finally, we include a free tree and its corresponding condition used to investigate an exceptional case in Section~\ref{sec:oddcase}, see Figure~\ref{fig:asymbush}.
  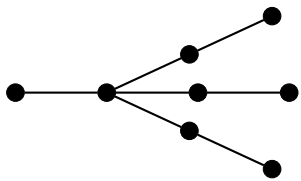
\begin{figure}[ht]
 \begin{center}
   \begin{tikzpicture}[dbtree]
    \node[dbtree black node] {}
     child[grow=right, level distance=12mm] { node[dbtree black node]{}
        child[grow=25, level distance=12mm]{ node[dbtree black node]{}
            child[grow=25, level distance=12mm]{ node[dbtree black node]{} } }
        child[grow=east, level distance=12mm] { node[dbtree black node]{} 
            child[grow=east, level distance=12mm]{ node[dbtree black node]{} }}
        child[grow=-25, level distance=12mm]{ node[dbtree black node]{}
             child[grow=-25, level distance=12mm]{ node[dbtree black node]{}}}
    };
  \end{tikzpicture}
  \end{center}
  \caption{ This tree has one single leaf to the left and $q$ branches of length two to the right}
 \label{fig:asymbush}
\end{figure}
The energy preservation condition corresponding to this tree is found to be
\begin{equation} \label{eq:asymbush}
b^T(Ac)^{q} - qb^TAC(Ac)^{q-1} + qb^TC(Ac)^{q-1}-
\left(\tfrac{1}{2}\right)^q=0,\quad 1\leq q\leq m-1.
\end{equation}

\subsection{\texorpdfstring{The case  $s=2$, $m=3$}{s=2,m=3}}
\label{subsec:s2m3}
 The general ideas of the proof can be illustrated for the case where a two stage Runge.-Kutta
 method is applied to problems with a cubic Hamiltonian. Then the
 abscissae, $c_1, c_2$ must be those of a third order quadrature rule. Thus, they are the zeros of $P_2(x)-\zeta P_1(x)$ for some $\zeta\in\mathbf{R}$, or equivalently the abscissae satisfy the condition   $3(c_1+c_2)-6c_1c_2=2$.  We represent any $2\times 2$ matrix in in the form
 $$
      A= \alpha_{1,1}\,\mathbf{1}b^T + \alpha_{2,1}\,cb^T+\alpha_{1,2}\,\mathbf{1}b^TC
      +\alpha_{2,2}\, cb^TC
 $$
  There is just one double bush condition \eqref{eq:DBmono} with $p=1, q=2$,
  $$
     b^TAc^2 - 2b^TCAc = 0.
  $$
 Substituting our form of $A$, and using the quadrature conditions with $k\leq 3$, we get
  $$
  \alpha_{11}
 + \alpha_{2,1}
 + (\tfrac1{2}-\tfrac{1}{6}\zeta)\,\alpha_{1,2}
+ ( \tfrac{7}{12}-\tfrac1{12}\zeta)\,\alpha_{2,2} = 0
  $$
 where we have made use of the fact that $b^Tc^3=\frac{1}{4}+\frac{1}{36}\zeta$. 
 The kernel is three dimensional,  a basis is given as
 $$
N_1= (\mathbf{1}-c)b^T,\quad
N_2= 2\mathbf{1}b^T + 3(\mathbf{1}-2c)b^TC,\quad
 N_3=2\zeta\mathbf{1}b^T + 3(7\mathbf{1}-6c)b^TC
 $$
 So any Butcher matrix candidate must be of the form
 $A=cb^T + \beta N$ where $N=v_1N_1+v_2N_2+v_3N_3$ and the row sum condition
 \eqref{eq:rowsum} then implies $N\mathbf{1}=0$.
  We obtain after some calculations that $N\in\ker M$ satisfying this condition must be a multiple of
  \begin{equation}
        N =  \left((\zeta-1)\mathbf{1}-2\zeta c\right)b^T(I-2C)
 \end{equation}
There are several possible nonlinear conditions to choose from in order to prove that any candidate solution of the form $A=cb^T + \beta N$ would require $\beta=0$. 
By taking $P(x)=G(x)=x(x-1)$ and $R(x)=1$ in \eqref{eq:3bush} and inserting our candidate solution, we find that $\frac{1}{81}\beta^2\zeta^3 =0$ which shows that one must have $\beta=0$
and $A=cb^T$ unless $\zeta=0$.

The remaining case $\zeta=0$ can be resolved by using the condition \eqref{eq:asymbush}
where upon inserting the expression $A=cb^T+\beta N$ one obtains the condition
$-\frac{1}{36}\beta^2(1+\zeta)^2=0$, therefore $\beta=0$ also for $\zeta=0$.

  
 \section{The case of even degree Hamiltonians} \label{sec:proof}
 In this section, we prove Theorem~\ref{theo:main} for the case that the polynomial Hamiltonian is of even degree $m=2s$, such that the underlying quadrature is the Gauss-Legendre formula. We use the following notation for the standard $L^2$ inner product between functions $u$ and $v$
 \[
      \langle u, v\rangle = \int_0^1 u(x)v(x)\,\mathrm{d}x
 \] 
For every non-negative integer $q$, we let $P_{q}$ be the Legendre polynomial of degree $q$ 
 \begin{equation}
    P_q(x) = \frac{1}{q!}\frac{\mathrm{d}^q}{\mathrm{d}x^q} x^q(x-1)^q,
 \end{equation}
 relative to the interval $[0,1]$, scaled such that $P_q(1)=1$ for every $q$, and consequently
 \begin{equation} \label{eq:legortho}
 \langle P_{k}, P_{\ell}\rangle = 
 \frac{\delta_{k\ell}}{2k+1}
 \end{equation} 
The polynomials 
 \begin{equation} \label{eq:Gpol}
     G_q(x) = \int_0^x P_{q-1}(t)\,\mathrm{d}t,\quad q\geq 1,
 \end{equation}
have for $q\geq 2$ the abscissae of the Gauss-Lobatto quadrature as zeros, and
  $$G_q(x)=\frac{1}{2(2q-1)}(P_q(x)-P_{q-2}(x))=\frac{1}{q(q-1)}x(x-1)P_{q-1}'(x).$$  
The following biorthogonality relations will be useful
\begin{equation}\label{eq:biortho}
\langle G_1,P_{\ell}'\rangle=1\;\forall \ell\in\mathbb{N},\qquad
\langle G_{q+1}, P_{\ell}' \rangle =-\frac{\delta_{\ell q}}{2q+1}\;\forall q\geq 2,
\ell\in\mathbb{N}.
\end{equation}
For any quadrature formula, we define the discrete counterpart to the inner product above
 \begin{equation} \label{eq:discip}
     \langle u, v\rangle_D = \sum_{i=1}^s b_i u(c_i) v(c_i) 
 \end{equation}
 and by a slight abuse of language we shall call it  the discrete inner product.
 If the quadrature formulas has order $m$ and $P$ and $Q$ are polynomials such that $\deg P + \deg Q \leq m-1$ then
  \begin{equation} \label{eq:disceqcont}
      \langle P,Q\rangle_D = \langle P, Q\rangle
 \end{equation}
 The discrete inner product can still be computed even in cases where it differs from
 the continuous one, the following result which will be of subsequent use, facilitates this in the case where $\deg P + \deg Q = m$.
 \begin{lemma} \label{lemma:discip}
 Suppose that a quadrature rule with abscissae $(c_1,\ldots,c_s)$  is exact for all polynomials of degree at most $m-1$, where $s\leq m\leq 2s$, and let $\rho_s=\prod_{i=1}^s (x-c_i)$. 
  Let $\pi_m$ be a monic polynomial of degree $m$.    
  Then
  \begin{equation}
  \langle \pi_m,1\rangle_D = \langle \pi_m,1\rangle - \langle \rho_s,\theta_{m-s}\rangle
  \end{equation}
for any monic polynomial $\theta_{m-s}$ of degree $m-s$.
 \end{lemma}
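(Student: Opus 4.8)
The plan is to exploit the nodal polynomial $\rho_s$, which by construction vanishes at every abscissa $c_i$. The crucial observation is that for any polynomial $v$ one has $\langle \rho_s, v\rangle_D = \sum_{i=1}^s b_i\,\rho_s(c_i)\,v(c_i) = 0$, so that adding or subtracting any polynomial multiple of $\rho_s$ leaves a discrete inner product against $1$ unchanged. This is precisely what lets an \emph{arbitrary} monic $\theta_{m-s}$ appear in the statement: in the discrete pairing the correction term is invisible, whereas in the continuous pairing it contributes exactly $\langle \rho_s,\theta_{m-s}\rangle$.

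First I would introduce the auxiliary polynomial $r = \pi_m - \rho_s\theta_{m-s}$. Since $\rho_s$ is monic of degree $s$ and $\theta_{m-s}$ is monic of degree $m-s$, the product $\rho_s\theta_{m-s}$ is monic of degree $m$, matching the leading term of $\pi_m$; hence the top-degree terms cancel and $\deg r \le m-1$. Next I would write $\pi_m = r + \rho_s\theta_{m-s}$ and expand by linearity, $\langle \pi_m,1\rangle_D = \langle r,1\rangle_D + \langle \rho_s\theta_{m-s},1\rangle_D$, where the second summand vanishes by the observation above. For the first summand, since $\deg r \le m-1$ and the rule is exact to degree $m-1$, relation \eqref{eq:disceqcont} gives $\langle r,1\rangle_D = \langle r,1\rangle$. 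Finally, expanding $\langle r,1\rangle = \langle \pi_m,1\rangle - \langle \rho_s\theta_{m-s},1\rangle$ and recognizing $\langle \rho_s\theta_{m-s},1\rangle = \int_0^1 \rho_s\theta_{m-s}\,\mathrm{d}x = \langle \rho_s,\theta_{m-s}\rangle$ produces the asserted identity.

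There is no substantial obstacle here; the whole argument rests on a single degree count together with the vanishing of $\rho_s$ at the nodes. The only points that require care are verifying that the difference genuinely drops below degree $m$ — which is exactly where the monic hypotheses on both $\pi_m$ and $\theta_{m-s}$ are used — and confirming that the range $s \le m \le 2s$ is what makes the setup consistent: $m \ge s$ ensures $\theta_{m-s}$ has nonnegative degree, and $m \le 2s$ is the condition under which an $s$-node quadrature exact to degree $m-1$ can exist at all.
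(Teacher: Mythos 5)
Your proof is correct and follows essentially the same route as the paper: you subtract the monic correction $\rho_s\theta_{m-s}$ to form $r=\pi_m-\rho_s\theta_{m-s}\in\Pi_{m-1}$ (the paper's $\delta_{m-1}$), note that $\rho_s$ vanishes at the nodes so the correction is invisible in $\langle\cdot,1\rangle_D$, and then invoke exactness of the rule on $\Pi_{m-1}$. The only difference is that you spell out the degree count and the vanishing-at-nodes step, which the paper leaves implicit.
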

\begin{proof}
Let  $\delta_{m-1}=\pi_m-\rho_s\theta_{m-s}\in\Pi_{m-1}$ for an arbitrary monic polynomial 
$\theta_{m-s}\in\Pi_{m-s}$.  Then
\begin{equation}
\langle \pi_m,1\rangle_D = \langle \delta_{m-1} + \rho_s\theta_{m-s},1\rangle_D
=\langle \delta_{m-1},1\rangle_D=\langle\delta_{m-1},1\rangle
=\langle\pi_m,1\rangle - \langle \rho_s,\theta_{m-s}\rangle.
\end{equation}
 \end{proof}
 We can apply this lemma to obtain the following discrete inner products when Gauss-Legendre quadrature is used
  \begin{align} \label{eq:Dortho2s}
     \langle P_{2s-r}, P_r\rangle_D &= -\frac{\gamma_{2s-r}\gamma_r}{\gamma_s^2(2s+1)},
     \qquad
     \gamma_\ell=\frac{(2\ell)!}{\ell!^2}\\[2mm]
     \langle G_{2s-r+1},P_r'\rangle_D&=\frac{r}{2s-r+1}\langle P_{2s-r}, P_r\rangle_D.
       \label{eq:Dbiortho2s}
 \end{align}
 Here $\gamma_\ell$ is the leading coefficient of $P_\ell$.

 In analyzing the rank of the linear operator $M$, it is useful to work with the transformed double bush conditions given in Lemma~\ref{lemma:DBpoly}, equation \eqref{eq:DBpoly}. Generally, one may select any suitable set of polynomials so that the rank of $M$ is not reduced. In this section we shall make the choices $G_p$ and $G_q$ for $P$ and $Q$, where
 $1\leq p<q\leq m-1$. It will also be convenient to write the elements $A$ in terms of a basis
 as follows
 \begin{equation} \label{eq:Aform}
       A = \sum_{k=1}^s\sum_{\ell=1}^s \alpha_{k,\ell} A_{k,\ell},\qquad A_{k,\ell}= P_{k-1}(c)b^T P_{\ell}'(C)
 \end{equation}
 The resulting equations for the coefficients $\alpha_{k,\ell}$ when considering $M(A)=0$ are
 \begin{equation} \label{eq:Mcond}
 \sum_{k=1}^s\sum_{\ell=1}^s \alpha_{k,\ell}
 \left(
   \langle P_{p-1}, P_{k-1}\rangle_D
     \langle G_{q}, P_{\ell}'\rangle_D-
       \langle P_{q-1}, P_{k-1}\rangle_D
         \langle G_{p}, P_{\ell}'\rangle_D
          \right)    
         =0,\ 1\leq p< q\leq m-1.    
 \end{equation}
We prove the following result.
\begin{lemma} \label{lemma:rank}
Suppose $m=2s$, $s\geq 2$ and $c, b$ are the abscissae and weights of the Gauss-Legendre quadrature.  Then $\rank(M) = s^2-1$ and  $\ker M=\mbox{span}\{N_1\}$.
\end{lemma}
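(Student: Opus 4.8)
The plan is to recast the homogeneous double bush conditions \eqref{eq:Mcond} as a symmetry condition on a bilinear form, and then to reduce the computation of $\ker M$ to an elementary fact about polynomials vanishing at the Gauss nodes. Using the basis \eqref{eq:Aform}, I collect the coordinates into an $s\times s$ matrix $\alpha=(\alpha_{k,\ell})$ and introduce the two $(2s-1)\times s$ matrices $U$ and $V$ with entries $U_{p,k}=\langle P_{p-1},P_{k-1}\rangle_D$ and $V_{q,\ell}=\langle G_q,P_\ell'\rangle_D$. Then \eqref{eq:Mcond} says precisely that $(U\alpha V^T)_{p,q}=(U\alpha V^T)_{q,p}$ for all $1\le p<q\le 2s-1$; equivalently, $A\in\ker M$ if and only if $U\alpha V^T$ is a symmetric $(2s-1)\times(2s-1)$ matrix. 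The number of conditions, $\binom{2s-1}{2}$, matches the codomain $\mathbf R^{\frac12(m-1)(m-2)}$.

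I would first record that the linear map $\Phi(\alpha)=U\alpha V^T$ is injective, i.e.\ that $U$ and $V$ have full column rank $s$. For indices $p,k\le s$ the total degree is at most $2s-2<m$, so \eqref{eq:disceqcont} and \eqref{eq:legortho} make the top $s\times s$ block of $U$ equal to $\operatorname{diag}(1,\tfrac13,\dots,\tfrac1{2s-1})$; for $q,\ell\le s$, \eqref{eq:biortho} makes the top $s\times s$ block of $V$ an arrowhead matrix (first row $\mathbf 1^T$, and in each row $q\ge2$ a single entry $-\tfrac1{2q-1}$ in column $q-1$). Both blocks are invertible, so $\Phi$ is injective and its image is $W=\{M:\operatorname{col}M\subseteq\mathcal U,\ \operatorname{row}M\subseteq\mathcal V\}$, where $\mathcal U=\operatorname{col}U$ and $\mathcal V=\operatorname{col}V$. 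By injectivity $\dim\ker M=\dim(W\cap\mathrm{Sym})$, where $\mathrm{Sym}$ denotes the symmetric matrices. A symmetric matrix in $W$ has its column space equal to its row space and hence contained in $\mathcal U\cap\mathcal V$, while conversely every symmetric matrix with column space in $\mathcal U\cap\mathcal V$ lies in $W$; therefore $\dim(W\cap\mathrm{Sym})=\binom{d+1}{2}$ with $d=\dim(\mathcal U\cap\mathcal V)$. The whole lemma thus reduces to proving $d=1$.

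The crux is this last step. Identifying $\mathcal U$ with $\{(\langle g,P_{p-1}\rangle_D)_p:g\in\Pi_{s-1}\}$ and $\mathcal V$ with $\{(\langle G_p,h\rangle_D)_p:h\in\Pi_{s-1}\}$ (the two parametrizations are injective, this being the full column rank of $U$ and $V$ just established), $d$ equals the dimension of the set of pairs $(g,h)\in\Pi_{s-1}^2$ satisfying $\langle g,P_{p-1}\rangle_D=\langle G_p,h\rangle_D$ for $p=1,\dots,2s-1$. Reading these as $2s-1$ linear equations in the $2s$ nodal values $g(c_i),h(c_i)$, I would show the equations are independent, so the solution space is one dimensional. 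Indeed, a linear combination $\sum_p\lambda_p(\cdot)$ of them that vanishes identically forces, using $b_i>0$, that the polynomials $\phi=\sum_p\lambda_p P_{p-1}$ and $\gamma=\sum_p\lambda_p G_p$ both vanish at every node $c_i$; since $\gamma'=\phi$ and $\gamma(0)=0$, each $c_i$ is a double root of $\gamma$ and $0$ is an additional root, giving $\gamma$ at least $2s+1$ roots although $\deg\gamma\le 2s-1$. Hence $\gamma\equiv0$, so $\phi\equiv0$ and all $\lambda_p=0$; thus $d=1$.

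Consequently $\dim\ker M=\binom{2}{2}=1$ and $\rank M=s^2-1$. Since $N_1=(\mathbf 1-c)b^T$ is a nonzero element of $\ker M$ by Lemma~\ref{lemma:N1}, it follows that $\ker M=\operatorname{span}\{N_1\}$. I expect the main obstacle to be the double-root argument of the crux; everything else is bookkeeping. In particular, this route avoids any explicit use of the aliasing identities \eqref{eq:Dortho2s}--\eqref{eq:Dbiortho2s}, which would only be needed if one computed the rank of $M$ by brute force rather than through the intersection $\mathcal U\cap\mathcal V$.
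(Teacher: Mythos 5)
Your proposal is correct, and it reaches the lemma by a genuinely different route than the paper. The paper works from the same basis \eqref{eq:Aform} and the same conditions \eqref{eq:Mcond}, but then does an explicit elimination: it selects $s^2-1$ conditions with $p+q\leq 2s+1$, orders them carefully, and runs an induction in which the conditions of total degree $2s$ --- precisely where \eqref{eq:disceqcont} fails --- are evaluated via the aliasing identities \eqref{eq:Dortho2s}--\eqref{eq:Dbiortho2s}, whose nonvanishing supplies the pivots of the $2\times 2$ systems that close each induction step. You instead recast $M(A)=0$ as symmetry of $U\alpha V^T$, prove injectivity of $\alpha\mapsto U\alpha V^T$ using only sub-critical degrees (where the discrete inner products are continuous, so \eqref{eq:legortho} and \eqref{eq:biortho} suffice; note that the $G_2$ row of your arrowhead block, i.e.\ $\langle G_2,P_\ell'\rangle=-\delta_{1\ell}/3$, is not literally covered by the stated range $q\geq 2$ in \eqref{eq:biortho} but follows from the same integration by parts), and reduce everything to $\dim(\mathcal{U}\cap\mathcal{V})=1$. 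That crux you settle by a clean Hermite-type root count: a dependency among the $2s-1$ functionals produces $\gamma=\sum_p\lambda_p G_p\in\Pi_{2s-1}$ with $\gamma(c_i)=\gamma'(c_i)=0$ at all $s$ nodes and $\gamma(0)=0$, giving $2s+1$ roots --- and the root at $0$ is genuinely extra because the Gauss nodes are interior. The remaining steps (image characterization, $\dim(W\cap\mathrm{Sym})=\binom{d+1}{2}$ via $S=W_0\Sigma W_0^T$ with $\Sigma$ symmetric, and Lemma~\ref{lemma:N1} pinning down the span) are routine and correctly handled. What each approach buys: yours avoids all aliasing computations, is coordinate-free at the critical degree, and explains structurally \emph{why} the kernel is one-dimensional; it even predicts the odd case, where the same count gives $d=2$ and hence kernel dimension $\binom{3}{2}=3$, consistent with Lemma~\ref{lemma:kerelts} --- though there the injectivity of $\alpha\mapsto U\alpha V^T$ can fail (at $\zeta=-1$ the vector of coefficients of $P_s'-P_{s-1}'$ is a null vector of the analogue of $V$), so extending your route would need care exactly where the paper's exceptional cases live. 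The paper's heavier elimination, in turn, produces the explicit discrete inner-product formulas that it reuses throughout the odd-degree analysis.
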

\begin{proof}
By Lemma~\ref{lemma:N1}, clearly $\rank(M)\leq s^2 -1$.   Note that the 
kernel element $N_1$ can be written in the format \eqref{eq:Aform} as 
$N_1=(\mathbf{1}-c)b^T=\frac14(A_{1,1}-A_{1,2})$.
Therefore, it must be true that $\rank(M)\geq s^2-1$ if some subset of the conditions \eqref{eq:Mcond} together with $\alpha_{1,1}=0$ cause the remaining $\alpha_{k,\ell}$ to vanish.
It is enough to consider just 
$s^2 -1$ (linearly independent) conditions among the $(2s-1)(s-1)$. 
We select the conditions corresponding to 
$1\leq p < q\leq 2s-1$, and such that $p+q\leq 2s+1$. The case $s=4$ is reported in figures~\ref{fig:figure1} and~\ref{fig:figure2}. The  $(p,q)$-element of the matrix in Figure~\ref{fig:figure1} corresponds to the condition in \eqref{eq:Mcond}.
The numbers refer to the ordering in which the conditions are used in the proof. The ones marked
$(n_a,n_b)$ are used simultaneously. The corresponding ordering of the unknowns $\alpha_{k,l}$ is reported in Figure~\ref{fig:figure2}.
\begin{figure}[ht]
\begin{minipage}[b]{0.45\linewidth}
\centering
\begin{tabular}{ccc}
 & & $q$\\
 & & $\longrightarrow$\\
$p$ & $\downarrow$ & $\left[
\begin{array}{ccccccc}
0 &10  & 9_a &  7_a & 4  & 7_b  & 9_b  \\
   & 0 & 8_a & 6_a & 3 &6_b  & 8_b \\
   &    & 0 & 5_a& 2 &5_b & -  \\
   &    &    & 0   & 1 &  -  & -  \\
   &    &    &    & 0   &  -  & -  \\
   &    &    &    &      & 0  & -  \\
   &    &    &    &      &     & 0  
\end{array}
\right]
$\\
\end{tabular}
\caption{Ordering of the conditions $(p.q)$.}
\label{fig:figure1}
\end{minipage}
\hspace{0.5cm}
\begin{minipage}[b]{0.45\linewidth}
\centering
\begin{tabular}{ccc}
 &          & $l$\\
 &          & $\longrightarrow$\\
$k$       & $\downarrow$& $
\left[
                                                \begin{array}{cccc}
                                                 0 & 9_a & 7_a& 4  \\
                                               10 & 8_a & 6_a &3 \\
                                              9_b  & 8_b & 5_a & 2  \\
                                                  7_b & 6_b  & 5_b & 1  \\
                                              \end{array}
                                              \right].
$\\
\end{tabular}
\caption{Ordering of  $\alpha_{k,l}$}
\label{fig:figure2}
\end{minipage}
\end{figure}

We begin by applying condition $(p,s+1)$ to $A_{k\ell}$ for $1\leq p\leq s$. Since \eqref{eq:disceqcont} applies for all inner products, we get from \eqref{eq:legortho} and \eqref{eq:biortho} that
\[
\langle P_{p-1}, P_{k-1}\rangle_D\,\langle P_\ell', G_{s+1}\rangle_D-
\langle P_{s}, P_{k-1}\rangle_D\,\langle P_\ell', G_{p}\rangle_D
=-\frac{s}{(s+1)(2s+1)(2k+1)}\delta_{pk}\delta_{s\ell}
\]
Thus  
$\alpha_{ps}=0$ for $1\leq p\leq s$.
We shall proceed by induction. Suppose it is true that
\begin{equation} \label{eq:indhyp}
    \alpha_{k\ell}=0,\qquad k>i+2,\ \ell>i+1,
\end{equation}
which is established for $i=s-2$. We now prove, by using conditions $(p,i+2)$ and $(p,2s-i)$ together with \eqref{eq:indhyp} that $\alpha_{k,\ell}=0$ for $k>i+1,\ \ell>i$. The first of these conditions applied to $A_{k\ell}$ yields
\[
\langle P_{p-1}, P_{k-1}\rangle_D
\langle P_{\ell}', G_{i+2}\rangle_D
-\langle P_{i+1}, P_{k-1}\rangle_D
\langle P_{\ell}', G_{p}\rangle_D
\]
 \eqref{eq:disceqcont} applies for all inner products
 and we conclude, using \eqref{eq:legortho} and \eqref{eq:biortho}, that condition $(p,i+2)$ implies, after multiplying both sides by $(2p-1)(2i+3)$
\begin{equation}\label{eq:feqpgt1}
    -\alpha_{p,i+1}+\alpha_{i+2,p-1}=0,
    \quad p>1
\end{equation}
and for $p=1$, multiplying each side by $2i+3$
\begin{equation}\label{eq:feqpeq1}
    -\alpha_{1,i+1}-\sum_{\ell=1}^s \alpha_{i+2,\ell}=0,\qquad
    p=1.
\end{equation}
We next consider condition $(p,2s-i)$ applied to $A_{k\ell}$ to get
\[
\langle P_{p-1}, P_{k-1}\rangle_D
\langle P_{\ell}', G_{2s-i}\rangle_D
-\langle P_{2s-i-1}, P_{k-1}\rangle_D
\langle P_{\ell}', G_{p}\rangle_D
\]
We readily compute $\langle P_{p-1}, P_{k-1}\rangle_D=\frac{\delta_{kp}}{2p-1}$ and
$\langle P_{\ell}',G_p\rangle_D=\frac{\delta_{p,\ell+1}}{2p-1}$ if $p>1$, and
$\langle P_{\ell}',G_1\rangle_D=1$.
For $\langle P_\ell',G_{2s-i}\rangle_D$, \eqref{eq:disceqcont} applies when $\ell\leq i$ causing it to vanish by \eqref{eq:biortho}. For $\ell=i+1$ the total degree equals $2s$, and by \eqref{eq:Dbiortho2s}
\[
    \langle P_{i+1}',G_{2s-i}\rangle_D=
    \frac{i+1}{(2s-i)}\langle P_{2s-i-1}, P_{i+1}\rangle_D
\]
Nonzero entries for $\ell>i+1$ can be ignored due to the induction hypothesis. Finally,
for $k\leq i+1$, one has $\langle P_{2s-i-1}, P_{k-1}\rangle_D=\langle P_{2s-i-1}, P_{k-1}\rangle = 0$.
For $k=i+2$, we invoke \eqref{eq:Dortho2s} just to assert that 
$\langle P_{2s-i-1}, P_{i+1} \rangle_D \neq 0$ so that this factor can be cancelled in the
condition $(p,2s-i)$ and we get
\begin{equation} \label{eq:seqpgt1}
-\frac{i+1}{2s-i}\alpha_{p,i+1}+\alpha_{i+2,p-1}=0,\quad p>1,
\end{equation}
and
\begin{equation} \label{eq:seqpeq1}
-\frac{i+1}{2s-i}\alpha_{1,i+1}-\sum_{\ell=1}^s \alpha_{i+2,\ell}=0,\quad p=1.
\end{equation}
Combining \eqref{eq:feqpgt1} and \eqref{eq:seqpgt1} we get for $p>1$ the system
\[
\left[
\begin{array}{cc}
-1 & 1 \\ 
-\frac{i+1}{2s-i} & 1
\end{array}
\right]
\left[
\begin{array}{c}
\alpha_{p,i+1}\\ \alpha_{i+2,p-1}
\end{array}
\right]
=0.
\]
and thus
\[
    \alpha_{p,i+1}=\alpha_{i+2,p-1}=0,\ p=2,\ldots,i+1.
\]
The remaining indices to be dealt with in the induction step are $\alpha_{1,i+1}$ and
$\alpha_{i+2,i+1}$ and for these we consider \eqref{eq:feqpeq1} and \eqref{eq:seqpeq1}.
We use that the only element in the ($i+2$)th row ($\alpha_{i+2,\ell}$) not yet found to be zero is 
$\alpha_{i+2,i+1}$ and so we obtain also in the case $p=1$ a nonsingular $2\times 2$ system
where the two unknowns must satisfy $\alpha_{1,i+1}=\alpha_{i+2,i+1}=0$.
The induction step is completed. The induction proof ensures that all $\alpha_{k\ell}=0$ except 
possibly $\alpha_{1,1}$ and $\alpha_{2,1}$, but the former is zero by assumption. But the remaining unused condition $(p,q)=(1,2)$ applied to $A_{2,1}$ yields $-1/3$ and thus also $\alpha_{2,1}=0$.
In summary, we have proved that for an $s\times s$-matrix $A$ is expressed in the form
\eqref{eq:Aform} with $\alpha_{1,1}=0$, the conditions \eqref{eq:Mcond} imply $A=0$ which is equivalent to $\rank(M)\geq s^2-1$. Combined with the known null-vector $N_1=(\mathbf{1}-c)b^T$ this proves that the rank of $A$ is precisely $s^2-1$.
 
\end{proof}  
  
\begin{proof} Theorem~\ref{theo:main} (even case).  
From Lemma~\ref{lemma:DBmono} \eqref{eq:avfmat} and Lemma~\ref{lemma:rank} we know that any solution to the double bush conditions for $m=2s\geq 3$ must be of the form $A=cb^T + \beta (\mathbf{1}-c)b^T$. But then the condition \eqref{eq:rowsum} immediately implies that
$$
      A\, \mathbf{1} = cb^T\mathbf{1} +\beta (1-c)b^T\mathbf{1}\quad\Rightarrow\quad
     \beta (\mathbf{1}-c)=0
$$
so that $\beta=0$ and we are left with the AVF method.
\end{proof}
  
\section{The case with odd degree} \label{sec:oddcase}
Suppose now that the degree $m$ of the Hamiltonian is odd. One still needs $c$ and $b$ which satisfy the quadrature conditions to order $m-1$. This means that it is necessary for the Runge-Kutta method to have at least $s=(m+1)/2$ stages such that $m\leq 2s-1$. On the other hand, choosing
$c$ and $b$ to be such quadrature points and letting $A=cb^T$ we have an energy preserving scheme with the minimal number of stages. We need to answer whether it is unique.

The strategy will be the same as in the even case. Now we assume that the quadrature rule consists of abscissae $(c_1,\ldots,c_s)$ which are the zeros of the polynomial $P_s-\zeta P_{s-1}$ for some real $\zeta$ and that $b_1,\ldots,b_s$ satisfy the quadrature conditions \eqref{eq:quadcond} for $k\leq s$.
The cases $\zeta=-1$ and $\zeta=1$ correspond to the Radau I and II formulas in which one has
$c_1=0$ and $c_s=1$ respectively. Of course $\zeta=0$ yields the Gauss-Legendre formula.
The case $s=2, m=3$ was discussed in Subsection~\ref{subsec:s2m3}, and in this section we sometimes assume tacitly that $s\geq 3$.
 
We let $R_l(x)$ $l=1,2,\dots$ be the polynomials of degree $l$ defined by
$$R_l(x)=\left\{\begin{array}{ll}
P_l(x), & l=0,\dots, s-1,\\
P_s(x)-\zeta P_{s-1}(x),& l=s,\\
R_s(x)P_{l-s}(x),&l\ge s+1.\\
\end{array}\right. 
$$
We consider also the polynomials $F_q$, defined for every positive integer as
 \begin{equation} \label{eq:Fpol}
     F_q(x) = \int_0^x R_{q-1}(t)\,\mathrm{d}t,
 \end{equation}
 so $F_q=G_q$ for $q=1,\dots ,s.$
 We observe that for $r\ge s$ we have
\begin{equation}
\label{PRr}
\langle P, R_r \rangle_D=0,
\end{equation}
for any polynomial $P$ of any degree. 

We will use the following explicit expressions for discrete inner products of polynomials.
\begin{lemma}\label{innerproductsF} For any real $\zeta$ we have
\begin{equation}
\label{ipsrsr}
\langle P_{s-r}',F_{s+r}\rangle_D= \zeta \,\frac{2s}{(2s-1)(s+r)}\, \frac{\gamma_{r-1}\gamma_{s-r}}{\gamma_{s-1}}\neq 0 \,\mathrm{when}\, \zeta\neq 0,
\end{equation}
\begin{equation} \label{eq:ipFsprPDsprm1}
\langle F_{s+r}, P_{s+1-r}'\rangle_D =
-\frac{\gamma_{r-1}\gamma_{s+1-r}}{\gamma_s (s+r)}\left(1+\zeta^2\frac{s+1-r}{(s+r-1)(2s-1)}\right)
\end{equation}
  \begin{equation}
  \label{}
  \langle P_s',F_{s+2}\rangle_D=-\,\zeta\,\frac{s}{(2s-1)(s+1)}\,\left(    \frac{s}{(2s-1)(s+2)}\zeta^2-1 \right),
  \end{equation}

\end{lemma}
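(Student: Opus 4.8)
The plan is to reduce every discrete inner product to a continuous one plus a correction that involves only the two highest coefficients in the graded basis $\{R_k\}$, and then to evaluate both pieces by elementary Legendre calculus.

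First I would record the reduction identity that drives the whole computation. Expand an arbitrary polynomial $\phi$ in the basis $\{R_k\}_{k\ge 0}$ (graded, since $\deg R_k=k$) as $\phi=\sum_k d_k R_k$. Every node $c_i$ is a zero of $R_s$, and $R_k=R_sP_{k-s}$ for $k\ge s$, so each $R_k$ with $k\ge s$ vanishes at all nodes and $\langle R_k,1\rangle_D=0$ by \eqref{PRr}; while for $k<s$ we have $R_k=P_k$ of degree less than $s$, so \eqref{eq:disceqcont} gives $\langle P_k,1\rangle_D=\langle P_k,1\rangle=\delta_{k0}$. Hence $\langle\phi,1\rangle_D=d_0$. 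Comparing with $\langle\phi,1\rangle=d_0+\sum_{k\ge s}d_k\langle R_s,P_{k-s}\rangle$ and using that $\langle R_s,P_{k-s}\rangle=\langle P_s-\zeta P_{s-1},P_{k-s}\rangle$ is nonzero only for $k\in\{2s-1,2s\}$, where it equals $-\zeta/(2s-1)$ and $1/(2s+1)$ respectively, I obtain for every $\phi$ of degree at most $2s+1$
\[
\langle\phi,1\rangle_D=\langle\phi,1\rangle+\frac{\zeta}{2s-1}\,d_{2s-1}-\frac{1}{2s+1}\,d_{2s}.
\]
This identity extends Lemma~\ref{lemma:discip} to the degrees $2s$ and $2s+1$ needed here, and it covers all three cases once each discrete inner product is written as $\langle uv,1\rangle_D$.

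For the continuous term $\langle u,v\rangle$ I would integrate by parts to move the derivative off $P'$ onto the integral $F$, using $F_q(0)=0$, $P_k(1)=1$, and $F_{s+r}(1)=\langle R_s,P_{r-1}\rangle=-\zeta\,\delta_{r,s}/(2s-1)$. The residual integrals $\langle P_j,R_sP_k\rangle=\langle P_jP_k,\,P_s-\zeta P_{s-1}\rangle$ are evaluated from \eqref{eq:legortho}, the three-term recurrence $(2x-1)P_n=\frac{(n+1)P_{n+1}+nP_{n-1}}{2n+1}$, and the parity $P_aP_b(1-x)=(-1)^{a+b}P_aP_b(x)$ about $x=\tfrac12$, which annihilates half of the candidate terms. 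This gives, for example, $\langle P_{s-r}',F_{s+r}\rangle=\zeta\,\gamma_{s-r}\gamma_{r-1}/(\gamma_{s-1}(2s-1))$ for the first identity and $\langle P_s',F_{s+2}\rangle=\zeta\,s/((2s+1)(2s-1))$ for the third. It then remains to compute the two top $R$-coefficients $d_{2s-1}$ and $d_{2s}$ of each product. Writing $\phi=R_sg+h$ with $\deg h<s$, these are the $P_s$- and $P_{s-1}$-coefficients of the quotient $g$ of $\phi$ on division by $R_s$, so they depend only on the top three ordinary coefficients of $\phi$ together with the leading and subleading coefficients of $R_s$, namely $\gamma_s$ and $2\gamma_sp_s^{(1)}-\zeta\gamma_s\gamma_{s-1}$, where $p_s^{(1)}=-\tfrac{s}{2}\gamma_s$ (the roots of $P_s$ being symmetric about $\tfrac12$). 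For the first identity $\phi$ has degree $2s-1$, so $d_{2s}=0$ and $d_{2s-1}$ is the leading coefficient of $\phi$ divided by $\gamma_s\gamma_{s-1}$; substituting into the reduction identity reproduces \eqref{ipsrsr} and exhibits it as $\zeta$ times a positive factor, hence nonzero for $\zeta\neq0$.

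I expect the main obstacle to be precisely the coefficient bookkeeping in the degree $2s$ and $2s+1$ cases, where the subleading terms must be carried through the division. Because $R_s$, and with it $R_{2s}=R_sP_s$ and $R_{2s+1}=R_sP_{s+1}$, depends linearly on $\zeta$, the back-substitution that yields $d_{2s-1}$ from $d_{2s}$ (and $d_{2s}$ from $d_{2s+1}$) multiplies $\zeta$-linear quantities, so $d_{2s-1}$ becomes quadratic in $\zeta$ for the degree-$(2s+1)$ product $P_s'F_{s+2}$. The final correction $\frac{\zeta}{2s-1}d_{2s-1}$ is then exactly what supplies the $\zeta^2$ term in \eqref{eq:ipFsprPDsprm1} and the $\zeta^3$ term in the expression for $\langle P_s',F_{s+2}\rangle_D$. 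Keeping these subleading Legendre coefficients and their $\zeta$-powers straight, rather than any conceptual difficulty, is where the care is required, and once it is done all three stated identities follow by direct substitution into the reduction formula.
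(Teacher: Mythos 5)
Your proposal is correct and takes essentially the same route as the paper, whose one-line proof invokes Lemma~\ref{lemma:discip} together with the three-term recurrence for Legendre polynomials: your reduction identity $\langle\phi,1\rangle_D=\langle\phi,1\rangle+\frac{\zeta}{2s-1}d_{2s-1}-\frac{1}{2s+1}d_{2s}$ is exactly the needed extension of that lemma to degrees $2s$ and $2s+1$, and the integration-by-parts, parity, and recurrence evaluations do reproduce all three stated formulas, including the $\zeta^2$ correction in \eqref{eq:ipFsprPDsprm1} and the $\zeta^3$ term in the last identity. One cosmetic slip only: the quantity $2\gamma_s p_s^{(1)}-\zeta\gamma_s\gamma_{s-1}$ you cite is the subleading coefficient of $R_{2s}=R_sP_s$, not of $R_s$ itself (whose subleading coefficient is $-\tfrac{s}{2}\gamma_s-\zeta\gamma_{s-1}$), but this does not affect the back-substitution or the result.
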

\begin{proof}
The proof relies on Lemma  \ref{lemma:discip} and the three term recursion formulae for Legendre polynomials.
\end{proof}

\begin{lemma} \label{lemma:Rsp}
For all $\zeta\neq -1$ 
there exists a polynomial $\tilde{P}_s$ of degree less than or equal to $s$ such that
\begin{equation}
\label{orthogonality:Rsp}
\langle \tilde{P}_s',F_{s+r}\rangle_D=0,\,\, r=1,\dots , s-2,\quad
\langle \tilde{P}_s',G_{1}\rangle=0, 
\end{equation}
and
\begin{equation}
\label{eq:G2nonzero}
\langle \tilde{P}_s',G_{2}\rangle\neq 0.
\end{equation}

\end{lemma}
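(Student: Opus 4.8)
The plan is to reduce the statement to a finite-dimensional linear-algebra problem and then to a single determinant whose vanishing is governed by $\zeta$. Only the derivative $\tilde P_s'$ enters the conditions \eqref{orthogonality:Rsp}--\eqref{eq:G2nonzero}, and it ranges over $\Pi_{s-1}$, a space of dimension $s$ (I recover $\tilde P_s$ afterwards as any antiderivative, automatically of degree $\le s$). Writing $\tilde P_s'=\sum_{k=1}^{s}c_k P_k'$, where the $P_k'$ ($k=1,\dots,s$) form a basis of $\Pi_{s-1}$, the displayed requirements become $s-1$ homogeneous linear equations in $(c_1,\dots,c_s)$ together with one inequation. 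A nonzero solution of the equations exists by dimension count, so the whole content of the lemma lies in arranging that this solution violates $\langle \tilde P_s',G_2\rangle=0$.

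First I would make the two continuous functionals explicit. By the biorthogonality \eqref{eq:biortho} one gets $\langle \tilde P_s',G_1\rangle=\sum_{k=1}^{s}c_k$, and a one-line integration by parts using $G_2(0)=G_2(1)=0$ and \eqref{eq:legortho} gives $\langle G_2,P_\ell'\rangle=-\langle P_1,P_\ell\rangle=-\tfrac{1}{3}\delta_{\ell 1}$, hence $\langle \tilde P_s',G_2\rangle=-\tfrac{1}{3}c_1$. Thus \eqref{eq:G2nonzero} is equivalent to $c_1\neq 0$. Next I would show that the discrete conditions do not see $c_1$: since the quadrature is exact to degree $2s-2$ by \eqref{eq:disceqcont} and $\deg\big(P_k'F_{s+r}\big)=k-1+s+r$, whenever $k+r\le s-1$ one may replace $\langle P_k',F_{s+r}\rangle_D$ by the continuous inner product, which vanishes after integrating by parts (using $F_{s+r}(0)=F_{s+r}(1)=0$ together with $F_{s+r}'=R_{s+r-1}=R_sP_{r-1}$ and orthogonality of $R_s$ to $\Pi_{s-2}$). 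In particular $\langle P_1',F_{s+r}\rangle_D=0$ for all $r\le s-2$, so the $s-2$ discrete equations involve only $c_2,\dots,c_s$, and $c_1$ is pinned down solely by $\langle \tilde P_s',G_1\rangle=0$, i.e.\ $c_1=-\sum_{k\ge 2}c_k$.

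These two observations reduce the lemma to one assertion: the essentially unique solution $(c_2,\dots,c_s)$ of the discrete conditions has nonzero coordinate sum. Equivalently, the $s$ functionals $\{\langle\cdot,F_{s+r}\rangle_D\}_{r=1}^{s-2}$, $\langle\cdot,G_1\rangle$, $\langle\cdot,G_2\rangle$ are linearly independent on $\Pi_{s-1}$, and since the $G_2$-functional has coordinate vector $(-\tfrac{1}{3},0,\dots,0)$, a cofactor expansion reduces this to $\det B'\neq 0$, where $B'$ is the $(s-1)\times(s-1)$ matrix built from the discrete conditions in columns $k=2,\dots,s$ together with the all-ones row coming from $G_1$. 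The staircase zeros above, combined with the explicit pivots of Lemma~\ref{innerproductsF} --- $\langle P_{s+1-r}',F_{s+r}\rangle_D\neq 0$ for every $\zeta$ by \eqref{eq:ipFsprPDsprm1}, and $\langle P_{s-r}',F_{s+r}\rangle_D\neq 0$ for $\zeta\neq 0$ by \eqref{ipsrsr} --- exhibit a triangular full-rank submatrix of the discrete block (pivot column $s-r$ when $\zeta\neq 0$, and $s+1-r$ when $\zeta=0$), so that block already has rank $s-2$ and $B'$ is genuinely governed by its last row.

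The main obstacle is the evaluation of $\det B'$. I would compute it as a polynomial in $\zeta$ from the closed forms of Lemma~\ref{innerproductsF} and the three-term Legendre recurrence, and show that it factors as $(\zeta+1)$ times a factor with no zero in the admissible parameter range (the $c_i$ real and in $[0,1]$, i.e.\ $\zeta\in(-1,1]$, the endpoints being the Radau formulas). The factor $\zeta+1$ is precisely the Radau I obstruction singled out in the statement; any further real zeros of $\det B'$ occur at values of $\zeta$ for which a node leaves $[0,1]$ and are therefore inadmissible. The case $s=3$ is telling: a direct computation gives $\det B'=\tfrac{1}{20}(\zeta+1)(\zeta+5)$, vanishing at $\zeta=-1$ and at the inadmissible value $\zeta=-5$. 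Carrying this determinant evaluation out for general $s$ --- establishing the $(\zeta+1)$ factorization and controlling the sign of the complementary factor on $(-1,1]$ --- is the delicate computational heart of the proof.
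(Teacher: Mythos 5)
Your reduction coincides step for step with the paper's own: the expansion of $\tilde P_s'$ in the basis $P_1',\dots,P_s'$, the identities $\langle\tilde P_s',G_1\rangle=\sum_k c_k$ and $\langle\tilde P_s',G_2\rangle=-\tfrac13 c_1$, the observation that the discrete rows have vanishing first entry, and your matrix $B'$ is exactly the paper's $\bar\Gamma$ (the last $s-1$ columns of $\Gamma$ bordered by the all-ones row coming from $G_1$), with the same staircase zeros and the same pivots from Lemma~\ref{innerproductsF}. The genuine gap is that you never prove the one statement on which everything hinges, namely $\det\bar\Gamma\neq0$ for $\zeta\neq-1$: you announce a factorization $\det B'=(\zeta+1)\times(\text{a factor with no admissible zero})$, verify it only at $s=3$, and explicitly defer the general-$s$ evaluation as ``the delicate computational heart''. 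The paper never evaluates this determinant globally. Instead it exploits the Hessenberg structure of $\bar\Gamma$: for $\zeta\neq0$ the anti-diagonal pivots $\langle F_{s+i},P_{s-i}'\rangle_D$ are nonzero by \eqref{ipsrsr}, invertibility is thereby reduced to the linear independence of the two last columns of $\bar\Gamma$, and that is settled by a purely local computation --- the two $2\times2$ determinants formed from the rows $F_{s+1}$ and $F_{s+2}$ against the all-ones row, evaluated from the closed forms of Lemma~\ref{innerproductsF}, cannot vanish simultaneously unless $\zeta=-1$ (the case $\zeta=0$ is dispatched separately by triangularity). So the paper replaces your global determinant identity by two explicit entries; your plan stops exactly where the actual work begins.

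Worse, your fallback --- discarding further real zeros of $\det B'$ because ``a node leaves $[0,1]$, i.e.\ $\zeta\in(-1,1]$'' --- is not available in this setting. Throughout the paper $\zeta$ is an arbitrary real parameter: $P_s-\zeta P_{s-1}$ has $s$ distinct real zeros for every real $\zeta$, Runge--Kutta abscissae are not confined to $[0,1]$, and Lemma~\ref{lemma:Rsp} and Lemma~\ref{lemma:rank10} are asserted for all $\zeta\in\mathbf{R}\setminus\{-1\}$. Your own $s=3$ computation is therefore more damaging than you acknowledge: $\det B'=\tfrac1{20}(\zeta+1)(\zeta+5)$ vanishes at the perfectly legitimate value $\zeta=-5$, and since for $s=3$ the discrete block consists of the single row $F_4$ (the row $F_{s+2}$ used by the paper exists only for $s\geq4$, as $r\leq s-2$), one checks directly from \eqref{ipsrsr} and \eqref{eq:ipFsprPDsprm1} that at $(s,\zeta)=(3,-5)$ every solution of the two conditions \eqref{orthogonality:Rsp} has $c_1=0$, so \eqref{eq:G2nonzero} genuinely fails there --- a boundary case your computation exposes (and which the paper's argument sidesteps only because it tacitly needs $s\geq4$ at this point). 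Consequently your strategy cannot succeed as stated at $s=3$, and for $s\geq4$ it would require precisely the general-$s$ determinant identity, with control of all real zeros and not just those in $(-1,1]$, that you have not supplied.
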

\begin{proof}
Consider the  vectors 
$f_{s+r}:=F_{s+r} ( c )\in\mathbf{R}^s$, $r=1,\dots , s-2$ and $g_1:=G_1( c )$. Then 
$$\mathrm{dim}\left(\mathrm{span}\{ f_{s+1}, \dots , f_{2s-2}, g_1  \}\right)=\gamma \le s-1,\quad \mathrm{dim}\left(\mathrm{span}\{ f_{s+1}, \dots , f_{2s-2}, g_1  \}^{\perp_D}\right)=s-\gamma \ge 1.$$
The superscript ``$\perp_D$''  denotes the complementary vector space with respect to $\langle \cdot , \cdot \rangle_D$ interpreted as an inner product on $\mathbf{R}^s$. Then there exists $\tilde{g}\neq 0$ and
$\tilde{g}\in \mathrm{span}\{ f_{s+1}, \dots , f_{2s-2}, g_1  \}^{\perp_D}$.
We  write $\tilde{g}$ by using the basis $P'_1( c ),\dots , P'_s( c )$ of $\mathbf{R}^s$, i.e.
$$\tilde{g}=\sum_{\ell=1}^sv_\ell P_\ell'( c ).$$
We define $\tilde{P}_s:=\sum_{\ell=1}^s v_\ell P_\ell'$. By construction this polynomial satisfies the orthogonality conditions of (\ref{orthogonality:Rsp}).
The condition (\ref{eq:G2nonzero}) is
$$\langle G_2,\tilde{P}_s\rangle_D=\sum_\ell v_\ell\,\langle G_2,P_\ell'\rangle_D=-\frac{1}{3}v_1,$$
and is nonzero if and only if $v_1\neq 0$.
We now prove that  if $\zeta\neq -1$  then $v_1\neq 0$.
Consider the $(s-1)\times s$ matrix $\Gamma$ with entries
$$\Gamma_{i,j}:=\langle F_{s+i},P_j'\rangle_D,\, i=1,\dots , s-2, \,j=1,\dots , s,\quad \Gamma_{s-1,j}:=\langle G_1,P_j'\rangle_D=1,\, j=1,\dots , s.$$
We observe that
$$\Gamma_{i,j}=0,\quad i+j\le s-1.$$
The conditions \eqref{orthogonality:Rsp} for
 the vector $v:=[v_1,\dots,v_s]^T$ can be written as
$$\Gamma\, v=0.$$
Let us define $\bar{\Gamma}$ to be the $(s-1)\times (s-1)$ Hessenberg matrix whose columns are the last $s-1$ columns of $\Gamma$, and denote by $\bar{v}$ the vector $\bar{v}:=[v_2,\dots,v_s]^T$, then we have
$$\bar{\Gamma}\,\bar{v}=-v_1\, \Gamma\, e_1,$$
where $e_1$ is the first canonical vector in $\mathbf{R}^s$. We also note that $\Gamma\, e_1=e_1$. If $\zeta =0$, $\bar{\Gamma}$ is upper triangular because the entries $\Gamma_{i,s-i}=\langle F_{s+i},P'_{s-i}\rangle_D=0$,  $i=1,\dots , s-2$, and $\bar{\Gamma}$ is invertible because $\langle F_{s+i},P'_{s-i+1}\rangle_D\neq 0$ for $i=1,\dots , s-2$, see Lemma \ref{innerproductsF}.

If  $\zeta\neq 0$, due to the Hessenberg form of $\bar{\Gamma}$ and the fact that $\Gamma_{i,s-i}=\langle F_{s+i},P'_{s-i}\rangle_D\neq 0$, 
one concludes that  $\bar{\Gamma}$ is invertible if and only if the two last columns of $\bar{\Gamma}$ are linearly independent.
By Lemma \ref{innerproductsF},   if $\zeta \neq -1$,  the two determinants
$$\mathrm{det}\left[\begin{array}{cc}
\langle F_{s+1} ,P'_{s-1}\rangle_D & \langle F_{s+1} ,P'_{s}\rangle_D\\
1 & 1\\
\end{array}\right], \quad \mathrm{det}\left[\begin{array}{cc}
\langle F_{s+2} ,P'_{s-1}\rangle_D & \langle F_{s+2} ,P'_{s}\rangle_D\\
1 & 1\\
\end{array}\right]\,$$
cannot be simultaneously zero.  Thus,  $\mathrm{det}\,\bar{\Gamma}\neq 0$, and we can write 
$$\bar{v}=-v_1\bar{\Gamma}^{-1}\, e_1.$$
As a consequence $v=[v_1,\bar{v} ]^T$ is a non trivial solution of $\Gamma v=0$ giving $\tilde{g}=\sum_{\ell=1}^sv_\ell P_\ell'( c ) \neq 0$, if and only if $v_1\neq 0$.
\end{proof}
 In the sequel we will use $\tilde{P}_s$ as characterized in Lemma~\ref{lemma:Rsp} 
 whenever $\zeta \neq -1$.
When $\zeta =1$, one may take $\tilde{P}_s=P_s+P_{s-1}-2P_1$,  and when $\zeta =0$, $\tilde{P}_s=P_2-P_1$. When $\zeta=-1$, we will use instead $\tilde{P}_s:=P_s-P_{s-1}$, for which only (\ref{orthogonality:Rsp}) hold, but  not (\ref{eq:G2nonzero}) as $\langle \tilde{P}_s',G_{2}\rangle=0$.

We express any $A$ in a form  similar to (\ref{eq:Aform}),
\begin{equation}
\label{eq:Abasisnew}
A=\sum_{k,\ell\le s}\alpha_{k,\ell}P_{k-1}( c )b^T\tilde{P}_\ell' ( C ),
\end{equation}
where we use the notation $\tilde{P}_\ell:=P_\ell$ for $l\le s-1$,  if $\zeta\neq 0$, and  $\tilde{P}_\ell:=P_\ell$ for $l\neq 2$ and $\tilde{P}_2:=P_s$,  if $\zeta =0$.

\begin{lemma} \label{lemma:rank10}
Suppose $m=2s-1$, $s\geq 3$ and $c, b$ are the abscissae and weights of the Radau quadrature with  $\zeta\in\mathbf{R} \setminus \{ -1\}$.  Then $\rank(M) \geq s^2-3$. If  $\zeta=-1$, then $\rank(M) \geq s^2-s-1.$
\end{lemma}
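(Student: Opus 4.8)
The plan is to adapt the rank computation of Lemma~\ref{lemma:rank} to the odd case, where the quadrature is exact only up to degree $2s-2$ and the basis \eqref{eq:Abasisnew} is used. Since we only want a lower bound on $\rank(M)$ (equivalently an upper bound on $\dim\ker M$), it suffices to select a convenient subfamily of double bush conditions and show that they already force all but a few of the coefficients $\alpha_{k,\ell}$ to vanish. Writing a candidate kernel element as $A=\sum_{k,\ell}\alpha_{k,\ell}P_{k-1}(c)b^T\tilde P_\ell'(C)$ and using the polynomial form \eqref{eq:DBpoly} with $P=F_p$, $Q=F_q$, each basis matrix contributes a product of two discrete inner products, $b^T R_{p-1}(C)\bigl(P_{k-1}(c)b^T\tilde P_\ell'(C)\bigr)F_q(c)=\langle R_{p-1},P_{k-1}\rangle_D\,\langle\tilde P_\ell',F_q\rangle_D$, so the homogeneous condition $(p,q)$ reads
\[
\sum_{k,\ell}\alpha_{k,\ell}\Bigl(\langle R_{p-1},P_{k-1}\rangle_D\,\langle\tilde P_\ell',F_q\rangle_D-\langle R_{q-1},P_{k-1}\rangle_D\,\langle\tilde P_\ell',F_p\rangle_D\Bigr)=0.
\]
The decisive structural fact is \eqref{PRr}: as soon as an index exceeds $s$ the factor $\langle R_\cdot,\cdot\rangle_D$ vanishes, splitting the conditions into a mixed regime ($p\le s<q$) and a small regime ($p<q\le s$), the large--large conditions being vacuous.

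I would first exploit the mixed conditions $p\le s<q\le 2s-2$. Here the second term drops and $\langle R_{p-1},P_{k-1}\rangle_D=\delta_{pk}/(2p-1)$, so the condition acts inside the single row $p$, giving $\sum_{\ell}\alpha_{p,\ell}\langle\tilde P_\ell',F_q\rangle_D=0$. The construction of $\tilde P_s$ in Lemma~\ref{lemma:Rsp} is what makes this tractable: the orthogonality $\langle\tilde P_s',F_{s+r}\rangle_D=0$ (for $r=1,\dots,s-2$) removes the $\ell=s$ unknown, leaving
\[
\sum_{\ell=1}^{s-1}\alpha_{p,\ell}\,\langle P_\ell',F_{s+r}\rangle_D=0,\qquad r=1,\dots,s-2,
\]
and the nonvanishing anti-diagonal values $\langle P_{s-r}',F_{s+r}\rangle_D\ne0$ from \eqref{ipsrsr} make this $(s-2)\times(s-1)$ system of full row rank $s-2$. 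Thus the mixed conditions have rank $s(s-2)=s^2-2s$ and leave exactly two free parameters per row: one among $\alpha_{p,1},\dots,\alpha_{p,s-1}$ and the untouched $\alpha_{p,s}$. At $\zeta=0$ the anti-diagonal degenerates, which is precisely why the basis swap $\tilde P_2:=P_s$ is built into \eqref{eq:Abasisnew} to restore a pivot.

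It then remains to eliminate the $2s$ surviving parameters using the small conditions $1\le p<q\le s$, for which $F_p=G_p$, $F_q=G_q$ and the biorthogonality \eqref{eq:biortho} applies. These couple distinct rows, and I would run an induction in the spirit of the even case, pairing conditions so as to produce invertible $2\times2$ systems that propagate the vanishing, while the column-$s$ parameters $\alpha_{p,s}$ are reached through the coefficient $\langle\tilde P_s',G_2\rangle$. Since $\binom{s}{2}\ge 2s-3$ for $s\ge3$, I would show that these conditions contribute an additional rank $2s-3$ on the $2s$-dimensional reduced space, giving $\rank(M)\ge(s^2-2s)+(2s-3)=s^2-3$. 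The vector $N_1=(\mathbf 1-c)b^T$ of Lemma~\ref{lemma:N1} accounts for one of the three remaining kernel directions.

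The exceptional behaviour at $\zeta=-1$ is forced by the single quantity $\langle\tilde P_s',G_2\rangle$. For $\zeta\ne-1$ it is nonzero by \eqref{eq:G2nonzero}, which is exactly the handle needed to reach the column-$s$ parameters in the small regime; for the Radau~I choice $\tilde P_s=P_s-P_{s-1}$ it vanishes, so that one branch of the elimination fails and an entire $(s-2)$-parameter family tied to column $s$ is left undetermined. This degrades the small-regime contribution by $s-2$, yielding $\dim\ker M\le s+1$ and hence $\rank(M)\ge s^2-s-1$. I expect the main obstacle to be the boundary inner products: in contrast to the even case, where \eqref{eq:disceqcont} applies throughout, many of the products needed here have total degree $2s-1$ or $2s$, where $\langle\cdot,\cdot\rangle_D$ departs from $\langle\cdot,\cdot\rangle$, and each must be supplied by Lemma~\ref{innerproductsF} (or Lemma~\ref{lemma:discip}). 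The delicate part is that the rank is driven precisely by the non/vanishing of these boundary values, so carrying the induction while keeping the $\zeta=0$ and $\zeta=-1$ degeneracies straight is where the real care is needed.
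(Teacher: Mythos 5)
Your proposal follows essentially the same route as the paper's proof: the same transformed conditions with $P=F_p$, $Q=F_q$, the same split via \eqref{PRr} into row-wise mixed conditions $(j,s+r)$ that are eliminated triangularly using \eqref{orthogonality:Rsp} and the nonzero pivots \eqref{ipsrsr} (with the $\tilde{P}_2:=P_s$ swap at $\zeta=0$), the same use of $\langle \tilde{P}_s',G_2\rangle$ — nonzero precisely when $\zeta\neq -1$, by \eqref{eq:G2nonzero} — to reach the column-$s$ unknowns, and the identical counts $s^2-3$ and $s^2-s-1$. The one cosmetic deviation is in the small regime: the paper does not pair conditions into invertible $2\times 2$ systems (that device belongs to the even case of Lemma~\ref{lemma:rank}) but instead uses decoupled single conditions, namely $(1,j)$ for $j=3,\dots,s$, then $(1,2)$, then $(2,s-r)$ for $r=0,\dots,s-3$, which realizes exactly your claimed additional rank $2s-3$.
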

\begin{proof}
For each $1\le p<q\le 2s-2$ 
we consider the condition obtained from Lemma~\ref{lemma:DBpoly}
by choosing $P(x)=F_p(x)$ and $Q(x)=F_q(x)$ as defined in \eqref{eq:Fpol}. 
Condition $(p,q)$ applied to $A$ is
\begin{equation}
\label{eq:transDBpq}
m_{p,q}(A)=\sum_{k,\ell}\alpha_{k,\ell}\left( \langle R_{p-1}, P_{k-1} \rangle_D\, \langle \tilde{P}_\ell',F_q\rangle_D-\langle R_{q-1}, P_{k-1} \rangle_D\, \langle \tilde{P}_\ell',F_p\rangle_D \right). 
\end{equation}
Due to the orthogonality properties of  the polynomials $R_p$ and $F_q$, 
all conditions vanish identically for  $ s+1\le p <q.$
We show that if $\alpha_{1,s}=\alpha_{2,1}=\alpha_{2,s}=0$ and $\zeta\in\mathbf{R} \setminus \{ 0,-1\}$, then $ m_{p,q}(A) =0$ for $1\leq p < q \le 2s-2$ implies that $\alpha_{k,\ell}=0$ for all $k,\ell=1,\dots ,s$. Analogously, if $\zeta=-1$, $\alpha_{1,s}=\alpha_{2,1}=\alpha_{2,s}=0$ and in addition $\alpha_{\ell,s}=0$ for $\ell=3,\dots ,s$, then $ m_{p,q}(A) =0$ for $1\leq p < q \le 2s-2$ implies $\alpha_{k,\ell}=0$ for all $k,\ell=1,\dots ,s$.

It is enough to consider a subset of  linearly independent conditions: $s^2-3$, $\forall \zeta\in\mathbf{R} \setminus \{ 0,-1\}$; and $s^2-s-1$ for $\zeta=-1$ respectively.  

These are all the conditions corresponding to $(p,q)=(j,s+r)$, $r=1,\dots,s-2$ and $j=1,\dots, s$ and successively $(p,q)=(j,s-r)$, $r=0,\dots , s-j-1$, and $j=1,2$. In figures \ref{fig:figure3} and \ref{fig:figure4} we show the ordering of the conditions and of the unknowns in the case $s=5$.
The numbers refer to the ordering in which the conditions are used in the proof, and in which the $\alpha_{k,\ell}$ will be shown to vanish.
\begin{figure}[ht]
\begin{minipage}[b]{0.45\linewidth}
\centering
\begin{tabular}{ccc}
 & & $q$\\
 & & $\longrightarrow$\\
$p$ & $\downarrow$ & $\left[\begin{array}{cccccccc}
                0 &19  & 18 &  17 & 16  & 1  & 2  & 3  \\
                & 0 & 22 &21 & 20 & 4 & 5 &6 \\
   &    & 0 & -& - & 7  & 8 &9  \\
   &    &    & 0   & - &  10 &   11&12 \\
   &    &    &    & 0   & 13 &  14& 15 \\
   &    &    &    &      & 0  & -  &- \\
      &    &    &    &      &   & 0  &-  \\
         &    &    &    &      &   &  & 0\\
\end{array}\right]
$
\\
\end{tabular}
\caption{Ordering of the conditions $(p.q)$.}
\label{fig:figure3}
\end{minipage}
\hspace{0.5cm}
\begin{minipage}[b]{0.45\linewidth}
\centering
\begin{tabular}{ccc}
 &          & $l$\\
 &          & $\longrightarrow$\\
$k$       & $\downarrow$& $
\left[
                                                \begin{array}{ccccc}
                                               19 & 3 & 2 & 1  & 0 \\
                                               0  & 6 & 5 &4 & 0\\
                                              18  & 9 & 8 & 7 & 22 \\
                                              17 & 12 & 11 & 10 & 21 \\
                                              16 & 15 & 14 & 13 & 20 \\
                                              \end{array}
                                              \right].
$\\
\end{tabular}
\caption{Ordering of  $\alpha_{k,l}$}
\label{fig:figure4}
\end{minipage}
\end{figure}

We start considering the case $\zeta \neq 0$, the proof is similar in the case $\zeta =0$ and we will highlight later the difference.  We first consider conditions $(p,q)=(j,s+r)$ and the unknowns $\alpha_{j,s-r}$, for a fixed $j$ and $r=1,\dots ,s-2$, and $j=1,\dots ,s$. 
For these conditions, due to  (\ref{PRr}), (\ref{eq:transDBpq}) simplifies into
\begin{equation}
\label{eq:conditionjsr}
\sum_{l}\alpha_{j,\ell}\,\langle \tilde{P}'_\ell,F_{s+r}\rangle_D =0,\qquad j=1,\dots, s.
\end{equation}
The last term of the sum at the left hand side vanishes due to $\langle \tilde{P}_s',F_{s+r}\rangle_D=0,$ see (\ref{orthogonality:Rsp}).
 For $\ell\le s-(r+1)$ we obtain that
 $$\langle P'_\ell,F_{s+r}\rangle_D =\langle P'_\ell,F_{s+r}\rangle=\int_0^1P'_\ell(t)F_{s+r}(t)\, dt=0,$$
 this follows using integration by parts and the orthogonality properties of the Legendre polynomials.
So for $r=1$ we get the equation
 $$\alpha_{j,s-1}\langle P'_{s-1},F_{s+1} \rangle_D=0,$$
 implying that $\alpha_{j,s-1}=0$ by (\ref{ipsrsr}).  Proceeding by induction over $r$  
 we similarly obtain that
 $$\alpha_{j,s-r}\langle P'_{s-r},F_{s+r} \rangle_D=0, \qquad r=2,\dots, s-2,$$
 and by (\ref{ipsrsr}), this in turn implies that $\alpha_{j,s-r}=0$  respectively for $r=2,\dots , s-2$.

For $\alpha_{j,1}$  and $j=3,\dots, s$, we consider the conditions $p=1$, $q=j$ leading to the equations
 $$\alpha_{j,1}\, \langle P'_1, G_1 \rangle=0,$$
 implying $\alpha_{j,1}=0$, since $\langle P'_1, G_1 \rangle=1$. Here we have used that $\alpha_{1,\ell}=0$ for $l=2,\dots, s$, $\langle P_1',G_j\rangle=0$, for $j=3,\dots, s$ and $\langle \tilde{P}_s',G_1\rangle=0$.

For $\alpha_{1,1}$ we use $(p,q)=(1,2)$, using $\alpha_{1,\ell}=0$ for $\ell=2,\dots ,s$ and $\alpha_{2,\ell}=0$ for $\ell=1,\dots ,s$ we simplify the equation into
$$\alpha_{1,1}\,\langle P_1',G_2\rangle =0,$$
giving $\alpha_{1,1}=0$ since $\langle P_1',G_2\rangle\neq 0$. The proof is here concluded for the case $\zeta= -1$.

We next consider conditions $(p,q)=(2,s-r)$, $r=0,\dots, s-3$, leading to the equations
$$\frac{1}{3}\sum_\ell\alpha_{2,\ell}\langle \tilde{P}_\ell',G_{s-r}\rangle_D-\frac{1}{2(s-r)-1}
\sum_\ell\alpha_{s-r,\ell}\langle\tilde{P}_\ell',G_2\rangle_D=0,$$
since we have shown that $\alpha_{2,\ell}=0$ for $\ell=2,\dots , s-1$ and $\alpha_{s-r,\ell}=0$, $\ell=1,\dots , s-1$, and by hypothesis, $\alpha_{2,1}=\alpha_{2,s}=0$.  We are left with the equation
$$\alpha_{s-r,s}\, \langle \tilde{P}_{s}',G_2\rangle=0.$$
By (\ref{orthogonality:Rsp}) $\langle \tilde{P}_{s}',G_2\rangle\neq 0$  for $\zeta\neq -1$,
so we conclude $\alpha_{s-r,s}=0$, $r=0,\dots , s-3$.

This concludes the proof, for the cases $\zeta\in\mathbf{R}\setminus \{ 0 \}$. In the case $\zeta =0$, $\tilde{P}_1'( c ),\dots , \tilde{P}_s'( c )$ is no longer a basis of $\mathbf{R}^s$ because  $\tilde{P}_s=P_2-P_1$ . We then consider $\tilde{P}_2:=P_s$ in (\ref{eq:Abasisnew}) and proceed as in the case, $\zeta\in \mathbf{R} \setminus \{ 0,-1 \}$. The equation (\ref{eq:conditionjsr}) becomes in this case
$$\alpha_{j,2}\langle \tilde{P}_2',F_{s+r}\rangle_D+\sum_{\ell=s-r+1}^{s-1}\alpha_{j,\ell}\langle P_\ell',F_{s+r}\rangle_D=0,$$
leading to $\alpha_{j,2}=0$ for $r=1$ and, by induction, to $\alpha_{j,s-r}=0$ for $1<r\leq s-2$, by using that $\langle \tilde{P}_{s-r+1}',F_{s+r}\rangle_D\neq 0$. The rest of the proof is the same as in the previous case.
\end{proof}

The following two discrete inner products are easy consequences of Lemma~\ref{lemma:discip}
\begin{align}
\langle P_{s+r-1}, P_{s-r}\rangle_D &= \frac{\gamma_{s+r-1}\gamma_{s-r}}{\gamma_s\gamma_{s-1}}
\frac{\zeta}{2s-1} \label{eq:discip1} \\
\langle G_{s+r}, P_{s-r}' \rangle_D &= \frac{s-r}{s+r} \langle P_{s+r-1}, P_{s-r}\rangle_D
\label{eq:discip2}
\end{align}
To establish that the bounds for the rank of $M$ are sharp, we shall simply derive a 
suitable set of linearly independent matrices in the 
kernel of $M$, and we make the ansatz that these kernel elements are all rank one matrices.
We use the generic representation
\begin{equation} \label{eq:UVform0}
    U(c)b^T V(C),\quad U(x)=\sum_{k=1}^s u_k, P_{k-1}(x),\quad
    V(x) = \sum_{\ell=1}^s v_\ell P_\ell'(x)
\end{equation}
and for convenience, we shall define $\displaystyle{v_0 := -\sum_{\ell=1}^s v_\ell}$.
\begin{lemma} \label{lemma:kerelts}
Let $s\geq 2$, $m=2s-1$ and suppose that the quadrature rule $(c,b)$ has order $2s-1$ where
$c_1,\ldots,c_s$ are the zeros of the polynomial $P_s(x)-\zeta P_{s-1}(x)$ for some real $\zeta$.
Then, if $\zeta\neq -1$ there exists a basis for $\ker M$  consisting of matrices
$$
    N_i = U^i(c) b^T V^i(C),\quad i=1,2,3.
$$
for polynomials $U^i\in\Pi_s$, $V^i\in\Pi_s$ that can be written in the form
\begin{equation} \label{eq:UVform}
      U^i(x) = \sum_{k=1}^s u_k^{(i)} P_{k-1}(x),\qquad
      V^i(x) = \sum_{\ell=1}^s v_\ell^{(i)} P_\ell'(x)
\end{equation}
having the properties\footnote{The first basis element $(i=1)$ is nothing else than $4N_1=4(\mathbf{1}-c)b^T$ so that 
all the coefficients $u_k^{(1)}, v_\ell^{(1)}$ not listed are zero.}
\begin{align*}
i=1: && u_1^{(1)}&=1, &u_2^{(1)}&=-1, & v_1^{(1)}&=1,\\
i=2: && u^{(2)}_1&=1, &u_2^{(2)}&=0, & v_1^{(2)}&=0,\\  
i=3:&& u_1^{(3)}&=0, &u_2^{(3)}&=1,   &v_0^{(3)}&:=\sum_{\ell}v_\ell^{(3)} = 0.\\
\end{align*}
If $\zeta\neq 0$, then one may choose $v_s^{(i)}=1,\ i=2,3$.
\end{lemma}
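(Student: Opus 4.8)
The claim is that when $\zeta \neq -1$ the kernel of $M$ admits a basis of three rank-one matrices $N_i = U^i(c)\,b^T V^i(C)$ with the normalizations listed. Lemma~\ref{lemma:rank10} already gives $\rank(M) \geq s^2-3$ for $\zeta \neq -1$, so $\dim \ker M \leq 3$; hence it suffices to exhibit three linearly independent matrices of the stated rank-one form lying in $\ker M$. My plan is therefore to search directly among rank-one matrices $U(c)b^T V(C)$ of the shape \eqref{eq:UVform0}, impose the double bush conditions, and show that the resulting linear constraints on the coefficients $(u_k,v_\ell)$ have exactly a three-parameter solution space, then read off a basis normalized as in the statement.

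First I would insert the generic ansatz \eqref{eq:UVform0} into the transformed double bush conditions \eqref{eq:transDBpq}, exactly as in the rank computation of Lemma~\ref{lemma:rank10}. Writing $A = U(c)b^TV(C)$, condition $(p,q)$ becomes
\[
  m_{p,q}(A) = \langle R_{p-1}, U\rangle_D\,\langle V, F_q\rangle_D
             - \langle R_{q-1}, U\rangle_D\,\langle V, F_p\rangle_D = 0,
\]
so the double bush operator \emph{factors through} the two scalar sequences
$\mu_r := \langle R_{r-1}, U\rangle_D$ and $\nu_r := \langle V, F_r\rangle_D$.
Thus $N$ of rank-one form lies in $\ker M$ precisely when the $2\times 2$ determinants $\mu_p\nu_q - \mu_q\nu_p$ vanish for all $1\le p<q\le 2s-2$, i.e. when the vectors $(\mu_r)_r$ and $(\nu_r)_r$ are proportional. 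Since $R_r$ and $F_r$ are orthogonal to low-degree data (cf. \eqref{PRr} and the integration-by-parts arguments in Lemma~\ref{lemma:rank10}), most of these inner products vanish automatically, and the surviving nontrivial determinants give a small, explicit linear system for the $u_k$ and $v_\ell$.

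The next step is to solve that reduced system and verify the normalizations. The choice $i=1$ is forced: taking $U = 4(\mathbf 1 - c)$-type data, i.e. $u_1^{(1)}=1,\,u_2^{(1)}=-1$ with $V(x)=P_1'(x)$ (so $v_1^{(1)}=1$), reproduces $4N_1 = 4(\mathbf 1 - c)b^T$, which is in $\ker M$ by Lemma~\ref{lemma:N1}. For $i=2,3$ I would use the freedom in $U$ (the constraint from the leading column involving $\langle R_{r-1},U\rangle_D$) together with the $\zeta$-dependent inner products \eqref{ipsrsr}, \eqref{eq:ipFsprPDsprm1} and \eqref{eq:discip1}--\eqref{eq:discip2} from Lemma~\ref{innerproductsF} to solve for the $v_\ell$ so that the proportionality $\mu \parallel \nu$ holds, then rescale to achieve $u_1^{(2)}=1,\,u_2^{(2)}=0,\,v_1^{(2)}=0$ and $u_1^{(3)}=0,\,u_2^{(3)}=1,\,v_0^{(3)}=0$. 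That the normalized $N_1,N_2,N_3$ are linearly independent follows at once from their distinct $(u_1,u_2)$-patterns $(1,-1),(1,0),(0,1)$, which are independent in $\mathbf R^2$. The final sentence, that $v_s^{(i)}=1$ may be imposed when $\zeta\neq 0$, is a scaling statement: when $\zeta\neq 0$ the inner products $\langle P_\ell',F_{s+r}\rangle_D$ that control the top coefficient are nonzero by \eqref{ipsrsr}, so $v_s$ cannot be forced to vanish and the vectors may be normalized to $v_s^{(i)}=1$.

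\textbf{Main obstacle.}
The genuinely delicate point is the case analysis in $\zeta$. When $\zeta = 0$ the Gauss--Legendre structure collapses several of the inner products in Lemma~\ref{innerproductsF} to zero (as already seen in Lemma~\ref{lemma:rank10}, where the basis $\tilde P_\ell'(c)$ degenerates and one must substitute $\tilde P_2 := P_s$), so the proportionality condition determines the $v_\ell$ through a different, sparser linear system; I expect the $v_s^{(i)}=1$ normalization to fail exactly here, which is why it is stated only for $\zeta\neq 0$. The bulk of the work is thus bookkeeping: tracking which $\mu_r,\nu_r$ survive, checking that the surviving determinants leave precisely three free parameters (matching the rank bound), and confirming the excluded value $\zeta=-1$ is genuinely where a kernel vector is lost. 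I would organize this as a clean computation of $\mu_r$ and $\nu_r$ from the lemmas above, treating $\zeta\neq 0$ and $\zeta=0$ as two parallel columns of the same argument.
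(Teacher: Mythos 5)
Your overall route coincides with the paper's: use $\rank(M)\geq s^2-3$ from Lemma~\ref{lemma:rank10} to get $\dim\ker M\leq 3$, then exhibit three linearly independent rank-one kernel elements via the ansatz \eqref{eq:UVform0}. Your factorization $m_{p,q}\bigl(U(c)b^TV(C)\bigr)=\mu_p\nu_q-\mu_q\nu_p$ with $\mu_r=\langle R_{r-1},U\rangle_D$, $\nu_r=\langle V,F_r\rangle_D$ is correct, and it is a tidier packaging than the paper's: since $\mu_r=0$ for $r\geq s+1$ by \eqref{PRr} while $\mu_r=u_r/(2r-1)$ for $r\leq s$ determines $U$, a nonzero rank-one kernel element is characterized by $\nu_{s+r}=0$ ($r=1,\dots,s-2$) together with the proportionality $\nu\parallel\mu$; this reproduces the paper's relations \eqref{eq:uk}--\eqref{eq:us} and \eqref{eq:condforv}, and — a real advantage — it makes the remaining conditions $3\leq p<q\leq 2s-2$, which the paper leaves to the reader, hold automatically. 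Note, however, that these constraints are bilinear in $(u,v)$, not linear as you say (they linearize only after the normalizations), and you defer exactly the computations on which the case analysis rests: the nonsingularity of the triangular system for $v_2,\dots,v_{s-1}$ (pivots given by \eqref{eq:discip1}--\eqref{eq:discip2}, nonzero precisely when $\zeta\neq 0$), and the true role of $\zeta\neq -1$, which in the paper is that the quantities $v_0^{(2)}$ and $v_1^{(3)}$ — by which one divides to recover $u$ from the proportionality relations — vanish exactly at $\zeta=-1$. Your heuristic that at $\zeta=-1$ ``a kernel vector is lost'' points the wrong way: there the kernel \emph{grows}, to dimension $s+1$; what is lost is the three-element rank-one basis structure.

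The genuine gap is the linear-independence step. You assert it ``follows at once'' because the patterns $(1,-1)$, $(1,0)$, $(0,1)$ ``are independent in $\mathbf{R}^2$'' — but three vectors in $\mathbf{R}^2$ are never linearly independent, and indeed $U^3=U^2-U^1$ (Lemma~\ref{lemma:rankkerelts}), so the $U$-side data alone cannot separate $N_1,N_2,N_3$: if the $V^i$ happened to be proportional, $N_3$ would literally be a linear combination of $N_1$ and $N_2$. Any correct argument must use the $V$-side, and the paper does so by right-multiplying $\sum_i\alpha_iN_i=0$ by $c$, which evaluates $b^TV^i(C)c=\langle V^i,x\rangle_D=-v_0^{(i)}$; then $v_0^{(3)}=0$ together with $v_0^{(2)}\neq 0$ (again valid only for $\zeta\neq-1$) yields $\alpha_1 U^1(c)-\alpha_2v_0^{(2)}U^2(c)=0$, hence $\alpha_1=\alpha_2=0$ and finally $\alpha_3=0$. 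Without an argument of this kind the basis claim — the actual conclusion of the lemma — is unproved, even though everything upstream of it in your proposal is sound in outline.
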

\begin{proof}
We first intend to show that the equations
\begin{equation}\label{eq:kercond}
   \sum_{k,\ell=1}^s u_kv_\ell\left(\langle P_{p-1},P_{k-1}\rangle_D
   \langle G_q,P_\ell'\rangle_D-
   \langle P_{q-1},P_{k-1}\rangle_D\langle G_p,P_\ell'\rangle_D \right)= 0,\quad
   1\leq p<q\leq 2s-2,
\end{equation}
have a solution with the given preset values from the lemma. 
We already know from Lemma~\ref{lemma:N1} that $U^1(c)b^TV^1(C)$ is a kernel element so henceforth we consider only $i=2,3$.

We treat the case $\zeta=0$ separately.  Consider
\begin{align}
u^{(2)} &= (1,0,-1,0,\ldots,0)^T,& u^{(3)} &= (0,1,-1,\ldots,0)^T,\label{eq:GLu2u3} \\
 v^{(2)}&=(0,1,0,\ldots,0)^T,  & v^{(3)}&=(1,-1,\ldots,0)^T. \label{eq:GLv2v3}
\end{align}
In these cases, all discrete inner products in \eqref{eq:kercond} equal the continuous ones for all $(p,q)$ because $\zeta=0$ implies that the quadrature rule has order $2s$, and all conditions are readily verified.

Until the end of this proof, we assume that $\zeta\neq 0$. We next use the conditions
$(p,q)$ of \eqref{eq:kercond}, setting $p=1,2$ and $3\leq q\leq s$.  We apply
\eqref{eq:legortho}, \eqref{eq:biortho} and for $q=s$ we use also \eqref{eq:discip2} with $r=0$.
We get
\begin{align}
v_{p-1} u_k &= u_p v_{k-1},\ 3\leq k\leq s-1,\label{eq:uk}\\
v_{p-1} u_s &= u_p(v_{s-1}-\zeta v_s), \label{eq:us}
\end{align}
The conditions with $p=1$ are useful for kernel elements of type $i=2$, and
those with $p=2$ can be used for the $i=3$ type.
We now select the conditions $(p,s+r)$ in
\eqref{eq:kercond} where $p=1,2$ and $1\leq r\leq s-2$. In this case, because of
\eqref{eq:legortho} and \eqref{eq:disceqcont}, the discrete inner products will vanish whenever
$\ell\leq s-r-1$ and $k-1\leq s-r-1$. We get
\begin{equation}
    u_p\sum_{\ell=s-r}^s v_\ell\langle G_{s+r},P_\ell' \rangle_D
    +  v_{p-1} \sum_{k=s-r+1}^s u_k\langle P_{s+r-1}, P_{k-1}\rangle_D=0,\ p=1,2.
\end{equation}
Substitute \eqref{eq:uk} and \eqref{eq:us} into these conditions and change summation index
 \begin{equation} \label{eq:condforv}
 u_p  \left(\sum_{\ell=s-r}^s v_\ell\, \langle G_{s+r}, P_{\ell}'\rangle_D 
   +  \sum_{\ell=s-r}^{s-1} v_{\ell}\, \langle P_{s+r-1}, P_{\ell}\rangle_D 
   - \zeta\, v_s\,  \langle P_{s+r-1}, P_{s-1}\rangle_D \right)=0, 
 \end{equation}
 for $\quad 1\leq r\leq s-2$ and $p=1,2$. We have thus derived $s-2$ conditions for the $s-1$ unknowns $v_2,\ldots,v_s$. Choosing $v_s=1$ there is an upper triangular system to be solved for 
 $v_2,\ldots, v_{s-1}$. This system is nonsingular, since the pivot elements, which can be computed explicitly from \eqref{eq:discip1} and \eqref{eq:discip2}, are nonzero whenever $\zeta\neq 0$.
 Note that  one must choose $p=1$ for the kernel element of type $i=2$ and $p=2$ for the $i=3$ type
 to have $u_p=1$. So we conclude that $v_\ell^{(2)}=v_\ell^{(3)}$ for 
 $2\leq\ell\leq s$. In order to solve for $u^{(i)}_k$, $i=2,3$ from \eqref{eq:uk}-\eqref{eq:us}, we need to make sure that $v_0^{(2)}=-\sum_\ell v_\ell^{(2)}\neq 0$ for $i=2$, and
 $v_1^{(3)}\neq 0$ for $i=3$. In the former case, note that $v_0^{(2)}=0$ would lead
 to $v_\ell^{(2)}=0,\ \ell=1,\ldots,s-2$ \eqref{eq:uk}, $v_{s-1}^{(2)}=\zeta$ \eqref{eq:us}, and thereby
 $-v_0^{(2)}=v_{s-1}^{(2)}+v_s^{(2)}=\zeta+1=0$ which is the exceptional case excluded in the lemma. Similarly, $v_1^{(3)}=0$ would also imply $\zeta=-1$. We conclude that all parameters 
 $u_k^{(i)}, v_\ell^{(i)}$ have been determined for $\zeta\neq 0$. It is left to the reader to verify that with these choices, all the remaining conditions $(p,q),\ 3\leq p<q\leq 2s-2$ are consequently satisfied.
 
 We already know from Lemma~\ref{lemma:rank10} that $\rank(M)\geq s^2-3$ for $\zeta\neq -1$, it therefore just remains to check that $N_i, i=1,2,3$ form a linearly independent set. Suppose
 $$
    \sum_{i=1}^3 \alpha_i U^i(c)b^T V^i(C) = 0.
 $$
 Multiplying this matrix from the right by the vector $c$, we obtain
 \[
      \alpha_1  U^1(c) - \alpha_2 v_0^{(2)} U^2(c) = 0,
 \]
 where we have used that $\langle V^3, G_1\rangle_D=-v_0^{(3)} = 0$.
 Thus, since $v_0^{(2)}\neq 0$ for $\zeta\neq -1$ and since clearly $U^1(c)$  and $U^2(c)$ are linearly  independent we conclude that $\alpha_1=\alpha_2=0$ so that also $\alpha_3=0$.
\end{proof}
As a by-product of the proof of the previous lemma, we obtain
\begin{lemma} \label{lemma:rankkerelts}
$\ker M$ consists of matrices $A$ such that $\rank A\leq 2$. More precisely, for $\zeta\neq -1$, one has
\begin{equation}
    U^{3}(x) = U^2(x)-U^1(x),\quad V^3(x)=V^2(x)+v_1^{(3)}V^1
\end{equation}
\end{lemma}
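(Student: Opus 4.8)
The plan is to extract the two identities directly from the explicit structure of the kernel elements constructed in the preceding proof of Lemma~\ref{lemma:kerelts}, rather than to prove anything new from scratch. The key observation is that for $\zeta\neq -1$ I have pinned down all the coefficients $u_k^{(i)},v_\ell^{(i)}$, so the claimed relations are simply a matter of comparing these coefficient vectors componentwise and reading off that $U^3=U^2-U^1$ and $V^3=V^2+v_1^{(3)}V^1$. Since each $U^i$ and $V^i$ lives in a fixed basis ($\{P_{k-1}\}$ for $U$ and $\{P_\ell'\}$ for $V$), it suffices to verify equality of the corresponding coordinate vectors $u^{(i)}$ and $v^{(i)}$.

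First I would handle the $V$-relation, which is the easier of the two. From the analysis in the proof of Lemma~\ref{lemma:kerelts} we established that $v_\ell^{(2)}=v_\ell^{(3)}$ for $2\leq\ell\leq s$; the only index where $V^2$ and $V^3$ may differ is $\ell=1$. Recalling the preset normalizations $v_1^{(2)}=0$ and $v_1^{(1)}=1$ (with all higher $v_\ell^{(1)}$ vanishing since $N_1$ is a rank-one matrix built from $P_0'$ only), the difference $V^3-V^2$ has $\ell=1$ coefficient equal to $v_1^{(3)}-v_1^{(2)}=v_1^{(3)}$ and zero elsewhere. Hence $V^3-V^2=v_1^{(3)}\,P_1'=v_1^{(3)}V^1$, which is exactly the claimed identity.

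Next I would treat the $U$-relation. Here I use that in the proof the coefficients $u_k^{(i)}$ for $3\leq k\leq s$ were all determined from the common relations \eqref{eq:uk}--\eqref{eq:us} driven by the shared $v_\ell$ data, together with the preset low-order values $u_1^{(2)}=1,u_2^{(2)}=0$ and $u_1^{(3)}=0,u_2^{(3)}=1$. Because the $i=2$ and $i=3$ solutions share the same $v_\ell$ ($2\leq\ell\leq s$) but are driven by $p=1$ versus $p=2$ respectively, the recursions \eqref{eq:uk}--\eqref{eq:us} produce coefficient vectors that differ, for $k\geq 3$, by a fixed multiple of the $u^{(1)}$ profile. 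Matching against $U^1$ (which has $u_1^{(1)}=1,u_2^{(1)}=-1$ and the appropriate higher coefficients forced by its kernel membership) shows that $u_k^{(3)}=u_k^{(2)}-u_k^{(1)}$ at every index: the preset values give $u_1^{(3)}=0=1-1=u_1^{(2)}-u_1^{(1)}$ and $u_2^{(3)}=1=0-(-1)=u_2^{(2)}-u_2^{(1)}$, and the higher-order coefficients agree because all three vectors satisfy the same homogeneous recursion with the same $v$-data. Finally, the rank bound $\rank A\leq 2$ follows immediately: every element of $\ker M$ is a linear combination of $N_1,N_2,N_3$, and these identities show the $U$-parts span a two-dimensional space (spanned by $U^1,U^2$) while the $V$-parts span a two-dimensional space (spanned by $V^1,V^2$), so any kernel element is a sum of at most two rank-one matrices.

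The main obstacle I anticipate is bookkeeping rather than conceptual: one must be careful that the coefficient comparisons for $3\leq k\leq s$ genuinely reduce to the \emph{same} recursion for all three indices $i$, which requires verifying that the $v$-data entering \eqref{eq:uk}--\eqref{eq:us} is identical across $i=2,3$ (already shown) and that $N_1$'s degenerate profile is consistent with that recursion at the low orders. Once this alignment is confirmed, both displayed identities are forced, and no further computation of the explicit pivots from \eqref{eq:discip1}--\eqref{eq:discip2} is needed.
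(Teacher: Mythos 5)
Your overall strategy is exactly the paper's: Lemma~\ref{lemma:rankkerelts} is presented there as a ``by-product of the proof of the previous lemma'' with no separate argument, and reading the two identities off the coefficient data of Lemma~\ref{lemma:kerelts} is the intended proof. Your treatment of the $V$-identity is complete and correct. Your justification of the $U$-identity, however, has a genuine (if small) gap: for $3\leq k\leq s$ the relations \eqref{eq:uk}--\eqref{eq:us} are \emph{not} ``the same recursion with the same $v$-data'' for $i=2$ and $i=3$. With the normalizations $u_1^{(2)}=1$ and $u_2^{(3)}=1$, the $i=2$ element is governed by the $p=1$ relation, $u_k^{(2)}=v_{k-1}^{(2)}/v_0^{(2)}$, while the $i=3$ element is governed by the $p=2$ relation, $u_k^{(3)}=v_{k-1}^{(3)}/v_1^{(3)}$ (and analogously for $k=s$ via \eqref{eq:us}). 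So the already-established equality $v_\ell^{(2)}=v_\ell^{(3)}$ for $2\leq\ell\leq s$ is not enough; you also need the two denominators to coincide, i.e.\ $v_0^{(2)}=v_1^{(3)}$, and this identity is never stated or proved in your proposal. Fortunately it is a one-liner from the preset data: $v_1^{(3)}=-\sum_{\ell\geq 2}v_\ell^{(3)}$ (since $v_0^{(3)}=0$) $=-\sum_{\ell\geq 2}v_\ell^{(2)}$ (since $v_\ell^{(2)}=v_\ell^{(3)}$ for $\ell\geq 2$) $=v_0^{(2)}$ (since $v_1^{(2)}=0$). With that inserted, your componentwise comparison ($0=1-1$ at $k=1$, $1=0-(-1)$ at $k=2$, and $u_k^{(3)}=u_k^{(2)}$ with $u_k^{(1)}=0$ for $k\geq 3$) does give $U^3=U^2-U^1$, and the rank bound follows as you say, since every element of $\ker M$ then has column space contained in $\mathrm{span}\{U^1(c),U^2(c)\}$.

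Two smaller caveats. First, the equality $v_\ell^{(2)}=v_\ell^{(3)}$ for $2\leq\ell\leq s$ that you cite as ``already shown'' was derived in the paper only under the running assumption $\zeta\neq 0$; for $\zeta=0$ one must check the identities directly on the explicit vectors \eqref{eq:GLu2u3}--\eqref{eq:GLv2v3}, and there the sign of $V^3$ must be flipped (harmless, since $N_3$ is determined only up to scale) for $V^3=V^2+v_1^{(3)}V^1$ to hold: one gets $V^3=P_2'-P_1'$ with $v_1^{(3)}=-1=v_0^{(2)}$, while $U^3=U^2-U^1$ is immediate from \eqref{eq:GLu2u3}. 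Second, the unqualified first sentence of the lemma also covers $\zeta=-1$, where your ``linear combination of $N_1,N_2,N_3$'' argument does not apply; there the rank bound instead follows from the basis in the lemma that comes next in the paper, since any kernel element takes the form $\alpha N_1 + W(c)\,b^T\bigl(P_s'(C)-P_{s-1}'(C)\bigr)$ for some polynomial $W$, which is manifestly of rank at most two.
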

We now consider the kernel of $M$ for the exceptional case $\zeta=-1$.
\begin{lemma}
For $\zeta=-1$ $\rank(M)=s^2-s-1 $. The following $s+1$ matrices constitute a basis for $\ker M$
$$
     N_1=4(\mathbf{1}-c)b^T,\ N_{i+1}=P_{i-1}(c)b^T(P_s'(C)-P_{s-1}'(C)),\quad i=1,\ldots,s.
$$
 \end{lemma}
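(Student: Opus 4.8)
The plan is to combine the lower bound $\rank(M)\geq s^2-s-1$ already furnished by Lemma~\ref{lemma:rank10} with an explicit exhibition of $s+1$ linearly independent elements of $\ker M$. Since $M\colon\mathbf{R}^{s\times s}\to\mathbf{R}^{N}$, producing $s+1$ independent kernel elements forces $\dim\ker M\geq s+1$, hence $\rank(M)\leq s^2-(s+1)=s^2-s-1$; together with the lower bound this yields $\rank(M)=s^2-s-1$ and $\dim\ker M=s+1$, so the $s+1$ matrices automatically constitute a basis. Thus the whole task reduces to (a) checking the listed matrices lie in $\ker M$, and (b) checking their linear independence.

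That $N_1=4(\mathbf{1}-c)b^T\in\ker M$ is Lemma~\ref{lemma:N1}. For the remaining matrices I write $V:=P_s'-P_{s-1}'$, so that $N_{i+1}=P_{i-1}(c)b^TV(C)$ and $V=\tilde P_s'$ with $\tilde P_s=P_s-P_{s-1}$, exactly the choice of $\tilde P_s$ designated for $\zeta=-1$. I would verify $N_{i+1}\in\ker M$ directly from the transformed double bush conditions \eqref{eq:transDBpq}, which are equivalent to $M(A)=0$. Feeding in $A=N_{i+1}$, i.e.\ $\alpha_{k,\ell}=\delta_{ki}\delta_{\ell s}$, the expression factorises as
\[
 m_{p,q}(N_{i+1})=\langle R_{p-1},P_{i-1}\rangle_D\,\langle V,F_q\rangle_D-\langle R_{q-1},P_{i-1}\rangle_D\,\langle V,F_p\rangle_D .
\]
Consequently it suffices to prove the single fact $\langle V,F_q\rangle_D=0$ for every $1\leq q\leq 2s-2$, which makes every $m_{p,q}(N_{i+1})$ vanish, \emph{uniformly} in $i$, and thereby handles all $s$ matrices at once.

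The vanishing of $\langle V,F_q\rangle_D$ splits into three ranges. For $s+1\leq q\leq 2s-2$ this is precisely the orthogonality \eqref{orthogonality:Rsp}, which holds for $\tilde P_s=P_s-P_{s-1}$ at $\zeta=-1$ by the remark following Lemma~\ref{lemma:Rsp}, so nothing new is needed. For $1\leq q\leq s-1$ one has $F_q=G_q$ and $\deg(VG_q)\leq 2s-2$, so the discrete product equals the continuous one; integrating by parts and using $P_s(1)=P_{s-1}(1)=1$, $G_q(0)=0$ and Legendre orthogonality gives $\langle V,G_q\rangle=-\langle P_s-P_{s-1},P_{q-1}\rangle=0$ since $\deg P_{q-1}\leq s-2$. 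The only genuine work is the borderline case $q=s$, where $\deg(VG_s)=2s-1$ exceeds the exactness degree; here I would evaluate $\langle V,G_s\rangle_D=\langle P_s',G_s\rangle_D-\langle P_{s-1}',G_s\rangle_D$ using \eqref{eq:discip1}--\eqref{eq:discip2} (equivalently Lemma~\ref{lemma:discip} with $\rho_s\propto P_s+P_{s-1}$) together with \eqref{eq:biortho}, finding both terms equal to $-1/(2s-1)$ and hence cancelling. This borderline evaluation is where the special structure of the Radau case $\zeta=-1$ enters, and I expect it to be the main obstacle; everything else is either elementary or already supplied by Lemma~\ref{lemma:Rsp}.

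Finally I would check linear independence. Suppose $4\beta_0(\mathbf{1}-c)b^T+w\,b^TV(C)=0$ with $w:=\sum_{i=1}^s\beta_iP_{i-1}(c)$. Comparing the $j$-th columns and using $b_j\neq 0$ gives $4\beta_0(\mathbf{1}-c)=-V(c_j)\,w$ for every $j$; if $w\neq 0$ this would force $V$ to take the same value at all $s$ distinct nodes, which is impossible since $\deg V=s-1\geq 2$ for $s\geq 3$. Hence $w=0$, whence $\beta_0=0$ (as $\mathbf{1}\neq c$) and, because $\{P_{i-1}(c)\}_{i=1}^s$ is a basis of $\mathbf{R}^s$, all $\beta_i=0$. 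This establishes the independence of $N_1,\dots,N_{s+1}$ and, via the counting argument of the first paragraph, completes the proof.
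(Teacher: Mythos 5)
Your proposal is correct, and its architecture is the same as the paper's: take the lower bound $\rank(M)\geq s^2-s-1$ from Lemma~\ref{lemma:rank10}, exhibit $s+1$ linearly independent kernel elements, and conclude by a dimension count. Where you genuinely diverge is in the mechanism for verifying $N_{i+1}\in\ker M$. The paper does this in one stroke with the identity $x\bigl(P_s'(x)-P_{s-1}'(x)\bigr)=s\bigl(P_s(x)+P_{s-1}(x)\bigr)$: since $P_s+P_{s-1}$ is, up to normalization, the node polynomial $R_s$ for $\zeta=-1$, the function $V=P_s'-P_{s-1}'$ vanishes at every nonzero node, while the node $c_1=0$ is killed by $F_q(0)=0$; hence $\langle F_q,V\rangle_D=0$ \emph{termwise} for every $q\geq 1$, and every condition vanishes at once. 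You instead prove the same family of vanishings by a three-range case analysis: exactness of the quadrature plus integration by parts for $q\leq s-1$; a genuine borderline computation at $q=s$ via Lemma~\ref{lemma:discip} (equivalently \eqref{eq:discip1}--\eqref{eq:discip2} at $r=0$), which you carry out correctly --- both terms equal $-1/(2s-1)$ and cancel, and this is indeed where the Radau structure enters; and, for $s+1\leq q\leq 2s-2$, a citation of the paper's remark after Lemma~\ref{lemma:Rsp} that \eqref{orthogonality:Rsp} holds for $\tilde{P}_s=P_s-P_{s-1}$ at $\zeta=-1$. That citation is legitimate (the remark precedes the lemma), but note that the remark is itself asserted without proof in the paper, and the identity above is exactly what proves it --- so the identity subsumes all three of your ranges and makes the argument both shorter and more self-contained. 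On the other side of the ledger, you supply an explicit linear-independence argument (comparing columns and using that a polynomial of degree $s-1$ cannot take one value at $s$ distinct nodes, with $b_j\neq 0$) where the paper merely says ``evident''; this is a worthwhile addition, and in fact only $\deg V\geq 1$ is needed there, so your restriction to $s\geq 3$ at that point is unnecessary. Net assessment: same skeleton and a correct proof, with a heavier but fully rigorous verification of the key vanishing, plus a detail the paper omits.
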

\begin{proof}
The linear independence of these matrices is evident. The identity
 $x(P_s'-P_{s-1}')=s(P_s+P_{s-1})$ shows that $\langle G_{q}, P_s'-P_{s-1}' \rangle_D=0$
 for all $q\geq 1$ so that \eqref{eq:kercond} is trivially satisfied for all $p$ and $q$.
 Since there are $s+1$ linarly independent elements, Lemma~\ref{lemma:rank10} ensures
 the rank of $M$ is precisely $s^2-s-1$ and 
  that the given set of matrices  forms a basis for $\ker M$ when $\zeta=-1$.
\end{proof}
 At this point we know that a necessary condition for a  Runge-Kutta method to be energy preserving for polynomial Hamiltonians is that its Butcher matrix is of the form $A=cb^T + N$ where $N\in\ker M$. 
Now, we make use of the row sum condition~\eqref{eq:rowsum} to infer the additional requirement on 
$N$ that $N\cdot\mathbf{1}=0$. The following lemma is easily proved.
\begin{lemma} \label{lemma:onedim}
For all $\zeta\in\mathbf{R}$, the intersection of $\ker M$ and the set of all $s\times s$ matrices
with zero row sum, i.e. $N\cdot\mathbf{1}=0$,
 is a one dimensional subspace of $\mathbf{R}^{s\times s}$ consisting of matrices of rank at most one.
\end{lemma}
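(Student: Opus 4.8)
The plan is to work within the odd setting of this section (the parameter $\zeta$ records the quadrature rule) and to treat the two descriptions of $\ker M$ obtained above separately: the generic case $\zeta\neq -1$, where Lemmas~\ref{lemma:kerelts} and~\ref{lemma:rankkerelts} give a three–dimensional kernel spanned by rank–one matrices, and the exceptional case $\zeta=-1$, where the kernel has dimension $s+1$. In each case I would intersect $\ker M$ with the zero row–sum condition and read off both the dimension and the rank.

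For $\zeta\neq -1$ the decisive observation is that all three basis matrices $N_i=U^i(c)b^TV^i(C)$ have their column vectors in the two–dimensional space $\mathcal U=\mathrm{span}\{U^1(c),U^2(c)\}$ and their row vectors in $\mathcal W=\mathrm{span}\{b^TV^1(C),b^TV^2(C)\}$; this is exactly what Lemma~\ref{lemma:rankkerelts} encodes through $U^3=U^2-U^1$ and the fact that $V^3$ is a combination of $V^1$ and $V^2$. Hence $\ker M$ sits inside the four–dimensional tensor space $\mathcal U\otimes\mathcal W$, and after fixing linearly independent frames $\mathbf u_1,\mathbf u_2$ of $\mathcal U$ and $\mathbf w_1,\mathbf w_2$ of $\mathcal W$, every $N\in\ker M$ is represented by a $2\times2$ matrix $B$ with $N=[\mathbf u_1,\mathbf u_2]\,B\,[\mathbf w_1;\mathbf w_2]$ and $\rank N=\rank B$; under this identification $\ker M$ is a codimension–one hyperplane $\mathcal H\subset\mathbf R^{2\times2}$.

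First I would compute the row sum in these terms. Writing $\mathbf s=(b^TV^1(c),b^TV^2(c))^T$ gives $N\mathbf 1=[\mathbf u_1,\mathbf u_2]\,B\,\mathbf s$, so the zero row–sum condition is precisely $B\mathbf s=0$. Since $\mathbf s\neq 0$ — indeed $b^TV^1(c)=2$ because $V^1=P_1'\equiv2$ and $\sum_i b_i=1$ — the set $\mathcal S=\{B:B\mathbf s=0\}$ is two–dimensional and consists entirely of singular matrices, which yields the rank–one conclusion for free. It then remains to intersect $\mathcal S$ with $\mathcal H$: as $\mathcal H$ is a hyperplane, $\dim(\mathcal S\cap\mathcal H)=1$ exactly when $\mathcal S\not\subseteq\mathcal H$. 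I would establish this non–containment by exhibiting a single $B$ with $B\mathbf s=0$ that is not of the form $\alpha_1B_1+\alpha_2B_2+\alpha_3B_3$; this reduces to the $V^1$–component of $V^3$ being nonzero, equivalently $v_1^{(3)}\neq0$, which Lemma~\ref{lemma:kerelts} ties precisely to $\zeta\neq -1$. The exceptional case $\zeta=-1$ I would handle directly from its basis $N_1=4(\mathbf 1-c)b^T$ and $N_{i+1}=P_{i-1}(c)b^T(P_s'(C)-P_{s-1}'(C))$: the bulk of the kernel shares the single row direction $W=P_s'-P_{s-1}'$, so every element is $N=\mathbf v\,b^TW(C)+\gamma(\mathbf 1-c)b^T$. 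Using $xW=s(P_s+P_{s-1})$, which vanishes at every abscissa, together with $c_1=0$ for Radau~I, the scalar $b^TW(c)=b_1W(0)$ is nonzero, so $N\mathbf 1=(b^TW(c))\,\mathbf v+\gamma(\mathbf 1-c)=0$ forces $\mathbf v$ to be a multiple of $\mathbf 1-c$, collapsing $N$ into $\gamma(\mathbf 1-c)$ times a row vector — again a one–dimensional space of rank–one matrices.

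I expect the main obstacle to be the dimension bookkeeping in the generic case, namely proving $\mathcal S\not\subseteq\mathcal H$ so that the intersection is exactly one–dimensional rather than two. This is the only place where $\zeta\neq -1$ is genuinely needed, and it must be isolated cleanly so that the different, higher–dimensional kernel at $\zeta=-1$ is routed to the separate argument above. The rank statement, by contrast, costs nothing once the $2\times2$ reduction is in place, since a nonzero vector in the kernel of $B$ makes singularity automatic.
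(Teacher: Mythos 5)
The paper offers no proof of this lemma at all --- it is dismissed with ``the following lemma is easily proved'' --- so there is nothing to diverge from: your argument is correct, uses exactly the ingredients the surrounding text presupposes (Lemmas~\ref{lemma:kerelts} and \ref{lemma:rankkerelts} for $\zeta\neq-1$, the explicit $(s+1)$-dimensional basis for $\zeta=-1$), and its conclusions match the explicit spanning elements the paper records right after the lemma, namely $(P_0(c)-P_2(c))\,b^TP_2'(C)$ for $\zeta=0$ and $U=P_0-P_1$, $V=(-1)^sP_1'-P_{s-1}'+P_s'$ for $\zeta=-1$. The one assertion you leave unverified, that $b^TW(c)=b_1W(0)\neq 0$ in the Radau~I case, does hold: $W(0)=P_s'(0)-P_{s-1}'(0)=(-1)^{s-1}2s^2$ on $[0,1]$, and $b_1\neq 0$ since no weight of an order-$(2s-1)$ rule with $s$ nodes can vanish; with this, your one-parameter family is nonzero for $\gamma\neq0$ (its rows away from $c_1$ are multiples of $b_i\neq0$), so the dimension is exactly one, and your $2\times2$ reduction in the generic case, where $\mathcal S\subseteq\mathcal H$ would force $v_1^{(3)}s_1=0$ with $s_1=b^TV^1(c)=2$, is likewise sound, including at $\zeta=0$ where only the span containment $V^3\in\mathrm{span}\{V^1,V^2\}$ and the nonvanishing $V^1$-component are actually needed.
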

%
%
From the preceding lemma, we now conclude that any  Butcher matrix we
search for is of the form $cb^T + \beta \,N$ where $N=Ub^TV\in\ker M$. We show that such
candidates are indeed incompatible with the nonlinear triple bush conditions presented  
in subsection~\ref{subsec:nonlincond}
unless $\beta=0$. Of particular use to us is the
  condition obtained by choosing $P(x)=Q(x)=G_p(x)$ and $R(x)=G_1(x)=x$ in \eqref{eq:3bush}. Because of the symmetry, and because $G_p(1)=\delta_{p1}$, we get the following simple special case:
\begin{equation} \label{eq:3bushp2q2}
2b^TP_{p-1}(C)ACAG_p(c)
-2\delta_{p1}b^TCAG_p(c)
-b^T(AG_p(c) \odot AG_p(c)) + \nu_p^2 = 0,
\end{equation} 
 where $\nu_1=\frac12$, $\nu_2=\frac1{6}$ and $\nu_p=0$ when $p\geq 2$.

\begin{lemma} \label{lemma:conclusion_generalcase}
If  $A=cb^T+\beta \,N$ is a solution to the triple bush condition \eqref{eq:3bushp2q2}
where $N=U(c)b^TV(C)\in\ker M$ with $\zeta\not\in\{-1,0\}$,   then $\beta=0$.
\end{lemma}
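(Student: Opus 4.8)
The plan is to substitute the candidate $A=cb^T+\beta N$ into \eqref{eq:3bushp2q2} and to extract the resulting polynomial in $\beta$. Here $N=U(c)b^TV(C)\in\ker M$ is the essentially unique rank-one element with zero row sum furnished by Lemma~\ref{lemma:onedim}, so that $\langle V,1\rangle_D=0$. The decisive simplification is that a rank-one matrix acts on any evaluated polynomial by collapsing it to a scalar: $Nw(c)=\langle V,w\rangle_D\,U(c)$ and $cb^Tw(c)=\langle w,1\rangle_D\,c$. Propagating $AG_p(c)=\mu_p\,c+\beta w_p\,U(c)$, with $\mu_p=\langle G_p,1\rangle_D$ and $w_p=\langle V,G_p\rangle_D$, through the factors $CA(\cdot)$ and $A(\cdot)$ turns each of the three terms of \eqref{eq:3bushp2q2} into an explicit quadratic in $\beta$ whose coefficients are discrete inner products.

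The constant term is $\Phi_p(cb^T)$, which vanishes because the AVF matrix $cb^T$ is energy preserving and hence satisfies every triple bush condition. Next I would show the coefficient of $\beta$ is zero. This coefficient is a linear functional of $N$, so it suffices to test it on the basis $N_1,N_2,N_3$ of $\ker M$ from Lemma~\ref{lemma:kerelts}; a direct reduction shows that on a rank-one element it is a constant multiple of $v_0u_2-v_1u_1$ (for example $\tfrac13$ for $p=1$ and $\tfrac19$ for $p=2$), and the preset coordinates $u_1^{(i)},u_2^{(i)},v_1^{(i)},v_0^{(i)}$ listed in Lemma~\ref{lemma:kerelts} make this vanish for $i=1,2,3$.

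The heart of the argument is the coefficient of $\beta^2$,
\begin{equation*}
\Phi_p^{(2)}=2\,\langle V,G_p\rangle_D\,\langle V,xU\rangle_D\,\langle P_{p-1},U\rangle_D-\langle V,G_p\rangle_D^{2}\,\langle U,U\rangle_D ,
\end{equation*}
which I must show is nonzero for a suitable $p$ whenever $\zeta\notin\{-1,0\}$. For this I would insert the explicit polynomials $U$ and $V$ of the zero-row-sum element, reconstructed from Lemmas~\ref{lemma:kerelts} and~\ref{lemma:rankkerelts}, and evaluate the discrete inner products. The low-degree ones are exact and contribute $\zeta$-independent rationals, whereas the high-degree ones carry an explicit factor of $\zeta$ through \eqref{eq:discip1}, \eqref{eq:discip2} and Lemma~\ref{innerproductsF}. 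The net effect is that $\Phi_p^{(2)}$ reduces to a nonzero rational multiple of a positive power of $\zeta$ --- for $s=2$ this is exactly the factor $\zeta^3$ seen in Subsection~\ref{subsec:s2m3}. The role of the hypotheses becomes transparent here: $\zeta\neq-1$ is what secures the assumed rank-one form of the kernel element, while $\zeta\neq0$ is precisely what keeps $\Phi_p^{(2)}$ from degenerating.

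Assembling the three coefficients, \eqref{eq:3bushp2q2} collapses to $\beta^2\Phi_p^{(2)}=0$ with $\Phi_p^{(2)}\neq0$, whence $\beta=0$, as claimed. I expect the last step to be the only real obstacle: establishing $\Phi_p^{(2)}\neq0$ is not a soft matter but requires the explicit kernel polynomials and the discrete orthogonality identities, and one must make sure that the choice of $p$ keeps both $\langle V,G_p\rangle_D$ and the bracketed factor from vanishing simultaneously for admissible $\zeta$.
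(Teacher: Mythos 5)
Your skeleton coincides with the paper's proof: substitute $A=cb^T+\beta N$ into \eqref{eq:3bushp2q2} for $p=1,2$, kill the constant term by energy preservation of the AVF matrix, observe that the coefficient of $\beta$ is proportional to $u_2v_0-u_1v_1$ (which is precisely the kernel condition \eqref{eq:kercond} with $(p,q)=(1,2)$, so it vanishes for \emph{every} kernel element, not only after testing on the basis), and your expression for the quadratic coefficient is correct --- it equals the paper's $-(2p-1)^{-2}\langle 2u_p\,xV+v_{p-1}U,\,v_{p-1}U\rangle_D$ once one inserts $\langle P_{p-1},U\rangle_D=u_p/(2p-1)$ and $\langle V,G_p\rangle_D=-v_{p-1}/(2p-1)$.

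The gap is in the decisive step, which you defer (``insert and evaluate'') and whose outcome you predict incorrectly. The paper uses the relations \eqref{eq:uk}--\eqref{eq:us} to replace $v_{p-1}U$ by $u_p\bar V$ with $\bar V=\sum_{k=1}^s v_{k-1}P_{k-1}-\zeta v_sP_{s-1}$, and an exact evaluation of $\langle 2xV+\bar V,\bar V\rangle_D$ then yields $\beta^2u_p^2v_s^2(\zeta+1)^2/(2p-1)^2=0$ for $p=1,2$. So the nondegeneracy factor is $(\zeta+1)^2$, \emph{not} a positive power of $\zeta$: you have attributed the two hypotheses in reverse. It is $\zeta\neq-1$ that keeps the quadratic coefficient nonzero, while $\zeta\neq0$ enters only through the normalization $v_s=1$ of Lemma~\ref{lemma:kerelts}; at $\zeta=0$ the relevant kernel element has $V=P_2'$, hence $v_s=0$ (for $s\geq 3$), the quadratic coefficient genuinely vanishes, and \eqref{eq:3bushp2q2} \emph{cannot} force $\beta=0$ --- which is exactly why the paper needs the separate condition \eqref{eq:asymbush} with $q=s$ in Lemma~\ref{lemma:conclusion_exceptions}. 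The $\zeta^3$ you import from Subsection~\ref{subsec:s2m3} belongs to a different condition, namely \eqref{eq:3bush} with $P=Q=x(x-1)$ and $R(x)=1$, whereas \eqref{eq:3bushp2q2} has $R(x)=x$; transplanting that factor misleads your entire nondegeneracy analysis, and following it one would wrongly expect the argument to degenerate only at $\zeta=0$. Finally, two smaller points: the lemma concerns a general rank-one kernel element, not just the zero-row-sum one from Lemma~\ref{lemma:onedim}, and your worry about $\langle V,G_p\rangle_D$ and the bracketed factor vanishing simultaneously is resolved in the paper not by a clever choice of a single $p$ but by using \emph{both} $p=1$ and $p=2$ together with Lemmas~\ref{lemma:kerelts} and~\ref{lemma:rankkerelts}, which exclude $u_1=u_2=0$ for a rank-one kernel element --- a step your sketch needs but does not supply.
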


\begin{proof}
We substitute such a solution into \eqref{eq:3bushp2q2} with $p=1$, and $p=2$. The terms 
coming from $cb^T$  cancel since the AVF method is energy preserving. The linear terms in $\beta$
are proportional to $u_2v_0-u_1v_1$ and therefore vanish by \eqref{eq:kercond} using $p=1, q=2$.
The quadratic terms are computed by means of \eqref{eq:legortho} and \eqref{eq:biortho}, and we get the condition
$$
      -\frac{\beta^2}{(2p-1)^2} \langle 2u_p\,x\,V+ v_{p-1}U, v_{p-1}U\rangle_D = 0,\quad p=1,2.
$$
 We invoke \eqref{eq:uk}, \eqref{eq:us} to substitute for $v_{p-1} U$, 
 $$
      -\frac{\beta^2}{(2p-1)^2}\,u_p^2\, \langle 2xV+\bar{V}, \bar{V}\rangle_D  = 0,\quad\mbox{where}\quad
      \bar{V} = \sum_{k=1}^s v_{k-1} P_{k-1} - \zeta v_s P_{s-1}
$$
The inner product can be worked out, and one finally gets the condition
$$
     \frac{\beta^2}{(2p-1)^2}\,u_p^2\,v_s^2(\zeta+1)^2 = 0,\quad p=1,2.
$$
From Lemma~\ref{lemma:kerelts} and Lemma~\ref{lemma:rankkerelts} we deduce that
$u_1=0$ and $u_2=0$ simultaneously is impossible for any rank one kernel element.
For all $\zeta\neq 0$, $v_s=1$, so  the lemma holds as stated.
\end{proof}
We now consider the case $\zeta=0$ which corresponds to the Gauss-Legendre
 quadrature formula. From \eqref{eq:GLv2v3} we see that the second kernel element 
 has $V^2(x)=P_2'(x)$ which means that 
 $U^2(c)b^TV^2(C)\mathbf{1}=U^2(c)\langle P_2',1\rangle_D=0$, this causes
 the one dimensional subspace of kernel elements satisfying the row sum condition to be
 the span of
\begin{equation}
    N =   U(c)b^TV(C)=(P_0(c)-P_2(c)) b^TP_2'(C)
\end{equation}
where we have skipped the superscripts on $U$ and $V$ for ease of notation.
We use the condition \eqref{eq:asymbush} with $q=s$ which reads, after inserting
$A=cb^T+\beta N$, and using $\langle U,1\rangle_D=\langle V,x\rangle_D=1$,
\begin{equation} \label{eq:asymbushqs}
\langle (\tfrac{1}{2} x + \beta U)^s,1\rangle_D
+\tfrac{s}2\langle x, (\tfrac12 x + \beta U)^{s-1}\rangle_D
-\beta\langle xV, (\tfrac12 x + \beta U)^{s-1}\rangle_D
-\left(\tfrac12\right)^s = 0.
\end{equation}
We observe that these inner products involve polynomials of degree $2s$ whereas the underlying quadrature formula for this case has order $2s$ and is therefore exact for all polynomials of degree at most $2s-1$. Clearly, the condition \eqref{eq:asymbushqs} would hold exactly for all $\beta$ if all discrete inner products were replaced by continuous ones. We therefore conclude that it is only necessary to retain terms arising from the leading order $2s$, these are the terms multiplying $\beta^s$.
This results in the condition
$$
      (-1)^{s-1}\frac{(6\beta)^s}{\gamma_s^2} = 0
$$
and we have proved that any solution of the form $A=cb^T + \beta N$ requires $\beta=0$.
 
The final exceptional case is $\zeta=-1$ which corresponds to the Radau I quadrature and $c_1=0$. The one dimensional subspace of elements in $\ker M$ which satisfy the row sum condition is in this case given as the span of the matrix
$$
      N=U(c)b^TV(C),\quad      U(x)=P_0(x)-P_1(x),\quad  
      V(x)=(-1)^s P_1'(x) - P_{s-1}'(x) + P_s'(x)
$$
 We substitute $A=cb^T + \beta N$ into the triple bush condition \eqref{eq:3bush} with $P(x)=Q(x)=xG_2(x)$, $R(x)=1$.
 After evaluating all inner products, we find that  the condition yields
\begin{equation}
\label{eq:EPtriplebush122short}
-\frac49 \beta^2 = 0.
 \end{equation}
and therefore we must have $\beta=0$ and the only possible energy preserving method with
$\zeta=-1$ is the AVF method, $A=cb^T$.
We summarize these findings
\begin{lemma} \label{lemma:conclusion_exceptions}
Let $\zeta\in\{-1,0\}$,  $N\in\ker M$ and consider the matrix $A=cb^T + \beta N$, where
$N\cdot\mathbf{1}=0$. For the Runge-Kutta method with coefficients $(c,b,A)$ to be energy preserving, one must have $\beta=0$.
\end{lemma}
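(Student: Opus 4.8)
The plan is to handle the two exceptional values $\zeta\in\{-1,0\}$ separately, following exactly the template already rehearsed for the generic case in Lemma~\ref{lemma:conclusion_generalcase} and for the toy problem in Subsection~\ref{subsec:s2m3}: for each $\zeta$, identify the unique (up to scaling) rank-one kernel element $N=U(c)b^TV(C)$ that also satisfies the row-sum condition $N\cdot\mathbf{1}=0$, substitute $A=cb^T+\beta N$ into a judiciously chosen nonlinear energy-preservation condition, and show the resulting equation forces $\beta=0$. The reason these cases must be split off is precisely that the argument of Lemma~\ref{lemma:conclusion_generalcase} degenerates there: the quadratic term it produces is proportional to $v_s^2(\zeta+1)^2$, which collapses when $\zeta=-1$, while for $\zeta=0$ the normalization $v_s=1$ is unavailable and the structure of the kernel differs (the row-sum constraint selects $V=P_2'$ rather than a generic $V$).

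For $\zeta=0$ (Gauss--Legendre), the first step is to determine the one-dimensional space of row-sum-zero kernel elements. From the explicit kernel basis in \eqref{eq:GLu2u3}--\eqref{eq:GLv2v3}, the element with $V^2=P_2'$ satisfies $N\cdot\mathbf{1}=U^2(c)\langle P_2',1\rangle_D=0$ automatically, giving $N=(P_0(c)-P_2(c))\,b^TP_2'(C)$. I would then feed $A=cb^T+\beta N$ into the asymmetric-bush condition \eqref{eq:asymbush} with $q=s$, obtaining \eqref{eq:asymbushqs}. The crucial simplification is that all inner products there involve polynomials of degree $2s$, while the quadrature has order $2s$ (exact through degree $2s-1$); hence the condition holds identically for the continuous inner product and only the top-degree ($\beta^s$) contribution survives. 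Extracting that leading term via the leading coefficient $\gamma_s$ of $P_s$ yields a relation of the form $(-1)^{s-1}(6\beta)^s/\gamma_s^2=0$, forcing $\beta=0$.

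For $\zeta=-1$ (Radau I, $c_1=0$), the kernel is larger, so the main preparatory task is to pin down which combination of the $s+1$ basis matrices $P_{i-1}(c)b^T(P_s'(C)-P_{s-1}'(C))$ satisfies $N\cdot\mathbf{1}=0$; by Lemma~\ref{lemma:onedim} this is again one-dimensional, and the resulting $N$ can be written as $U(c)b^TV(C)$ with $U=P_0-P_1$ and $V=(-1)^sP_1'-P_{s-1}'+P_s'$. Here I cannot use \eqref{eq:asymbush} (its quadratic term vanishes at $\zeta=-1$), so instead I would apply the triple-bush condition \eqref{eq:3bush} with the choice $P=Q=xG_2(x)$ and $R=1$, whose higher polynomial degree keeps the $\beta^2$ term alive. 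After evaluating the inner products using \eqref{eq:legortho}, \eqref{eq:biortho} and the discrete relations of Lemma~\ref{innerproductsF}, the linear-in-$\beta$ terms cancel (the AVF part being energy preserving, and the remaining linear contribution vanishing by a double-bush relation \eqref{eq:kercond}), leaving a clean quadratic $-\tfrac49\beta^2=0$, whence $\beta=0$.

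The genuinely delicate step in both branches is the bookkeeping of the discrete inner products: because these are precisely the borderline cases where the discrete and continuous inner products disagree (total degree equal to $m+1$), one must track the correction terms supplied by Lemma~\ref{lemma:discip} and the formulas of Lemma~\ref{innerproductsF} rather than silently replacing $\langle\cdot,\cdot\rangle_D$ by $\langle\cdot,\cdot\rangle$. I expect the main obstacle to be verifying that, with the chosen test polynomials, \emph{all} lower-order contributions either cancel identically or reduce to already-established relations, so that a single nonvanishing quadratic coefficient remains; the choice of $P,Q,R$ (resp.\ $q=s$) is exactly what engineers this, and confirming it is where the real work lies.
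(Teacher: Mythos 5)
Your proposal is correct and follows the paper's own argument essentially step for step: the same row-sum-zero kernel elements (with $U=P_0-P_2$, $V=P_2'$ for $\zeta=0$ and $U=P_0-P_1$, $V=(-1)^sP_1'-P_{s-1}'+P_s'$ for $\zeta=-1$), the same use of \eqref{eq:asymbush} with $q=s$ and the leading-degree-$2s$ extraction giving $(-1)^{s-1}(6\beta)^s/\gamma_s^2=0$, and the same triple bush condition \eqref{eq:3bush} with $P=Q=xG_2$, $R=1$ giving $-\tfrac{4}{9}\beta^2=0$. There is nothing to correct.
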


\begin{proof} Theorem~\ref{theo:main} (odd case). By Lemma~\ref{lemma:conclusion_generalcase} and
Lemma~\ref{lemma:conclusion_exceptions} the theorem is proved.

\end{proof}

\section*{Acknowledgments.}
The first two authors would like to acknowledge the support from the IRSES project CRISP, and part of the work was carried out while the authors were visiting  Massey University, Palmerston North, New Zealand and Latrobe University, Melbourne, Australia.

 \bibliographystyle{plain}
 \bibliography{mybib}

\end{document}